\newtheorem{hypothesis}{Hypothesis}
\newtheorem{proposition}{Proposition}
\newtheorem{lemma}{Lemma}
\newtheorem{remark}{Remark}
\def\ds{\displaystyle}
\def\Rset{\mathbb{R}}
\def\tr{\mbox{tr}}
\def\det{\mbox{det}}
\begin{document}

\title{About the chemostat model
with a lateral diffusive compartment}
\author{{\sc Mar\'ia Crespo and Alain Rapaport}\\
MISTEA, Univ. Montpellier, INRA, Montpellier SupAgro\\
2 pl. Viala 34060 Montpellier, France\\
E-mails: {\tt maria.crespo@umontpellier.fr,alain.rapaport@inra.fr}}
\date{\today}

\maketitle

\begin{abstract}
We consider the classical chemostat model with an additional
compartment connected by pure diffusion, and analyze its asymptotic
properties.
We investigate conditions under which this spatial structure is
beneficial for species survival and yield conversion, compared to
single chemostat. 
Moreover we look for the best structure (volume repartition and diffusion rate) which minimizes the volume required to attain a desired yield conversion. The analysis reveals that configurations with a single tank connected by
diffusion to the input stream can be the most efficient.

\noindent {\bf Key-words.} chemostat model, compartments, diffusion, stability, yield
conversion, optimal design.
\end{abstract}

\section{Introduction}\label{sec:intro}

The model of the chemostat has been developed as a mathematical
representation of the chemostat apparatus invented  in the fifities
simultaneously by Monod \cite{M50} and Novick \& Szilard \cite{NZ50},
for studying the culture of micro-organisms, and is still today of
primer importance (see for instance \cite{HH05,HRDL08,Wetal16}).
Its mathematical analysis has led to the so-called ``theory of the
chemostat'' \cite{HHW77,SW95,HLRS17}.
This model is widely used for industrial applications
with continuously fed ``bioreactors'' for fermentations \cite{R71,SWH16}
or waste-water treatments \cite{GDL99,HRDL08,Wetal16}, but also 
in ecology for studying populations of
micro-organisms (or plankton) in lakes, wetlands, rivers or
aquaculture ecosystems \cite{H67,BSNP73,J74,KK78,RV15,Cetal15}, 
The word ``chemostat'' is often used to describe continuous cultures
of micro-organisms, even though it can be quite far from the
the original experimental setup.
The classical model of the chemostat assumes a perfectly mixed media 
which is generally verified for small volumes. For
industrial bioreactors or natural lakes with large volumes, the
validity of this assumption becomes questionable. 
This is why several extensions of this model with
spatial considerations have been proposed and studied in the literature.

The classical approaches for modeling non ideally mixed chemostats or
bioreactors rely on a continuous representation of the spatial
dimension (with systems of p.d.e.~as in \cite{KB92,D04}) or on a finite
number of interconnected compartments with different flow conditions
(with systems of
o.d.e.~as in the ``general gradostat'' \cite{STW91,SF79,HRG11}). 
Although there are many works in the literature on p.d.e.~models for
unmixed bioreactors (where nutrient diffuses on the media from the
boundary of the domain, see for instance \cite{SW95,DS96}), there are
comparatively few works for bioreactors with an advection term that represents
an incoming nutrient which is pushed inward (as in the chemostat apparatus).
Moreover, most of the mathematical analysis available in the literature consider
a spatial heterogeneity only in the axial dimension of the bioreactors
(leading to 1d p.d.e.~or compartments connected in series) as in
tubular or ``plug-flow'' bioreactors
\cite{GPMW64,D99,DLHRM08,ZCD15,DZC17} and (simple) gradostats
\cite{LW79,LW81,T86,SW91}.
Surprisingly, configurations of tanks in parallel rather than in series
have been much less investigated, apart simple considerations
in chemical reaction engineering \cite{L99,F08}.

In many cases, the axial direction appears to
be the one that generates the larger heterogeneity between the input
and output (when the main current lines are along this axis),
especially for height and relatively thin tanks under significant flow
rate.
The modeling with compartments has become quite popular in
the optimal design of bio-processes \cite{VT91}, as it allows to
determine easily the optimal sizes of a given number of tanks in
series for minimizing the residence time
\cite{LT82,HR89,GBBT96,GBT96,HRT03,HRD04,HD05,NS06,RHM08}. 
Several studies have shown the huge benefit that can
be obtained from one to two tanks in series (and even more but
marginally less with more than two).
Such considerations are similar to patches models or {\em islands
  models}, commonly used in theoretical ecology
\cite{MW67,H99} (or lattice differential equations \cite{SV09}). 
For instance, a recent investigation studies the influence of these structures 
on a consumer/resource model \cite{GGLM10}. However, ecological
consumer/resource models are similar to chemostat models apart the
source terms that are modeled as constant intakes of nutrient, instead
of dilution rates (or Robin boundary conditions) that are rather met
in liquid media.

Recent mathematical studies have revealed that considering
heterogeneity in directions transverse to the axial one (with 2d or
3d p.d.e models or compartments models with non serial interconnections) 
could have a significant impacts on the performances of the bioreactor and the
input-output behavior \cite{CIRR17}.
From an operational view point, it is often reported that ``dead
zones'' are observed in bioreactors and that the effective volumes
of the tanks have to be corrected in the models to provide accurate
predictions \cite{L99,HWBWSZ07,GS92,RDQ94,RGNL95,VAO05,SSRLHD10}.
Segregated habitats are also considered in lakes, where the bottom 
can be modeled as a {\em dead zone} and nutrient mixing between the
two zones is achieved by diffusion rate \cite{NT06}.
In a similar way, stagnant zones are well-known to occur in porous
media such as soils, at various extents depending on soil structure.
The effect of these dead zones on reactive and conservative mass
transport, and thus in turn on the biogeochemical cycles of elements,
can also be significant \cite{TI08,SJM00}.
However there have been relatively few analysis of models with
explicit ``dead-zones''. The wording ``dead-zone'' might be
slightly miss-leading as it can make believe that a part of the tank 
where there is no advection stream from the input flow has no biological
activity. But this does not necessarily mean that these ``dead-zones''
are entirely disconnected from other parts of the reactor. It is
likely to be influenced by diffusion rather than convection. This is
why we prefer to qualify these zones as ``lateral-diffusive compartments''.

The aim of the present work is to analyze the
chemostat model with two compartments (or two tanks), one of them
being connected by ``lateral-diffusion'', and to investigate
conditions under which having this compartment could be beneficial 
for the yield conversion compared to the chemostat model with a single
compartment of the same total volume. Although this structure falls into a particular
case of the {\em general gradostat}, the existing results in the
literature (see, e.g., \cite{T86,SW95})
are too general to give accurate conditions for the existence of a
positive equilibrium and its stability, and do not compare the
performances of each configuration.
The structure considered here can be also seen as a limiting case of
the pattern ``chemostats in parallel with diffusion connection'' studied in \cite{HRG11},
with only one vessel receiving an input flow rate. Nevertheless, this later
reference imposes some restrictions such as linear reaction between species and removal rate large
enough to avoid washout with a single tank. In the present work, we conduct a
deeper model analysis and investigate the impact of lateral diffusion from two
view points:
\begin{enumerate}
\item From an ecological perspective, we study the effect of the
  diffusion on a given volume repartition in terms of resource
  conversion. In particular, we aim at characterizing situations for
  which having a structure with lateral diffusion is better than having a single perfectly mixed volume.
\item From an engineering perspective, we look for the best volume repartition which, for a given diffusion rate, minimizes the total volume required to attain a desired resource conversion. This allows us to revisit the optimal design problem with such configurations, that was previously tackled but considering tanks connected in series (see, e.g., \cite{GBBT96,HRT03,HD05}). Additionally, we aim at determining the diffusion rate parameter that gives the best volume reduction.
\end{enumerate}

The article is organized as follows: in Section \ref{sec:modeling} we introduce the model 
describing the dynamics in the chemostat composed of two compartments,
one of them being connected by diffusion, and give results for the 
nonnegativity and boundedness of its solutions. In Section \ref{sec:equilibria} we determine the steady states and 
analyze its asymptotic stability. Section \ref{sec:performances}
investigates the resource conversion rate and characterizes when this structure is better than the single chemostat (i.e. a single perflectly mixed volume). Section \ref{sec:design} is dedicated to optimal design questions,
firstly when the diffusion rate is fixed and then when it can be tuned. Finally, Section \ref{sec:Interpretation} discusses and interprets the results.

\section{Modeling and preliminaries results}\label{sec:modeling}
We consider configurations of one tank of volume
$V_{1}$ interconnected by Fickian diffusion with a tank of volume
$V_{2}$, as depicted on Figure \ref{figdeadzone}.
\begin{center}
\begin{figure}[h]
\begin{center}
\includegraphics[height=4cm]{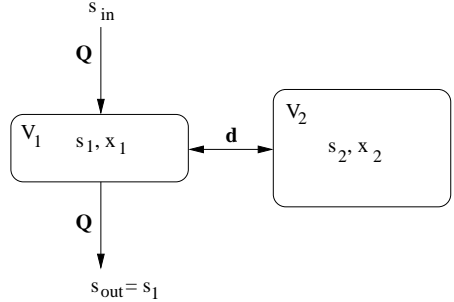}
\caption{Interconnection with lateral-diffusion.
\label{figdeadzone}}
\end{center}
\end{figure}
\end{center}
We denote by $s_{i}$, $x_{i}$ the concentrations of substrates and
biomass in tank $i=1, 2$ and write the equations of the chemostat model for
such interconnections:
\begin{equation}
\label{sys}
\left\{\begin{array}{lll}
\dot s_{1} & = & \ds -\mu(s_{1})x_{1} + \frac{Q}{V_{1}}(s_{\rm in}-s_{1})
+\frac{d}{V_{1}}(s_{2}-s_{1})\\[2mm]
\dot x_{1} & = & \ds \mu(s_{1})x_{1}-\frac{Q}{V_{1}}x_{1}+\frac{d}{V_{1}}(x_{2}-x_{1})\\[2mm]
\dot s_{2} & = & \ds -\mu(s_{2})x_{2} +\frac{d}{V_{2}}(s_{1}-s_{2})\\[2mm]
\dot x_{2} & = & \ds \mu(s_{2})x_{2}+\frac{d}{V_{2}}(x_{1}-x_{2})
\end{array}\right.
\end{equation}
where we have assumed, without any loss of generality, that the yield
conversion factor of substrate into biomass is equal to $1$.
The parameters $Q$ and $s_{\rm in}$ denote the flow rate and substrate
concentration of the input stream, while the parameter $d>0$ is the diffusion
coefficient  between the two tanks (that we assume
to be identical for the substrate and the micro-organisms).
The specific growth rate function of the micro-organisms is denoted
$\mu$ and fulfills the classical following assumption.

\begin{hypothesis}
\label{H1}
The growth function $\mu(\cdot)$ is increasing concave function with $\mu(0)=0$.
\end{hypothesis}

A typical such instance of function $\mu$ is given by the Monod law
(see, e.g., \cite{HLRS17,SW95}):
\[
\mu(s)=\mu_{max}\frac{s}{K+s},
\]
where $\mu_{max}$ is the maximum specific growth rate and $K$ is the half-saturation constant. 
\medskip
\begin{lemma}
\label{lem_inv}
The non-negative orthant $\Rset_{+}^4$ is invariant by dynamics \eqref{sys} and 
any solution in $\Rset_{+}^4$ is bounded.
\end{lemma}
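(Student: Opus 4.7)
\medskip

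The plan is to split the statement into its two parts and handle each by a standard dissipative-system argument.

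For the invariance of $\Rset_{+}^4$, I would rely on the elementary criterion that a closed positively invariant set is characterized by the vector field pointing inward (or being tangent) on its boundary. The boundary of $\Rset_{+}^4$ is the union of the four coordinate faces $\{s_1=0\}$, $\{x_1=0\}$, $\{s_2=0\}$ and $\{x_2=0\}$, and on each of these faces one simply reads off the corresponding right-hand side of \eqref{sys}. For example, on $\{s_1=0\}$ with the remaining coordinates non-negative, $\dot s_1 = (Q/V_1)s_{\rm in} + (d/V_1) s_2 \ge 0$; on $\{x_1=0\}$, $\dot x_1 = (d/V_1) x_2 \ge 0$; on $\{s_2=0\}$, $\dot s_2 = (d/V_2) s_1 \ge 0$; and on $\{x_2=0\}$, $\dot x_2 = (d/V_2) x_1 \ge 0$. (I would also invoke Hypothesis \ref{H1}, in particular $\mu(0)=0$, to ensure the nonlinear term vanishes on the relevant face, which is what makes this work cleanly.)

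For the boundedness part, my key trick is to eliminate the nonlinearities by aggregating substrate and biomass in each tank. Setting $z_i := s_i + x_i$ for $i=1,2$, the terms $\pm\mu(s_i)x_i$ cancel out and one obtains the closed linear subsystem
\begin{equation*}
\dot z_1 = \frac{Q}{V_1}(s_{\rm in}-z_1) + \frac{d}{V_1}(z_2-z_1), \qquad
\dot z_2 = \frac{d}{V_2}(z_1-z_2),
\end{equation*}
that is, $\dot z = A z + b$ with $b=(Qs_{\rm in}/V_1,0)^\top$ and $A$ a $2\times 2$ matrix whose trace is $-Q/V_1 - d/V_1 - d/V_2<0$ and whose determinant is $Qd/(V_1V_2)>0$. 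Hence $A$ is Hurwitz, the unique equilibrium of the linear system is readily checked to be $z^*=(s_{\rm in},s_{\rm in})$, and $z(t)$ converges to $z^*$ and is in particular bounded on $[0,+\infty)$. Combining this with the invariance part, we have $0\le s_i(t)\le z_i(t)$ and $0\le x_i(t)\le z_i(t)$, which yields boundedness of every component of the solution in $\Rset_+^4$.

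I do not foresee a real obstacle: the nonlinearity is tamed by the aggregation $z_i=s_i+x_i$, which reduces boundedness to a two-dimensional linear question solvable by trace/determinant inspection. The only point requiring a small amount of care is the invariance on the faces $\{s_i=0\}$, where one must not forget that $\mu(0)=0$ is needed so that $-\mu(s_i)x_i$ does not drive the trajectory out of the orthant.
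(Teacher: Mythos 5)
Your argument is correct, and it differs from the paper's proof mainly in how non-negativity is obtained. The paper works in the variables $z_{i}=s_{\rm in}-s_{i}-x_{i}$, gets a homogeneous linear $z$-subsystem with a Hurwitz matrix (so $z\to 0$), and then proves $s_{i}\ge 0$ and $x_{i}\ge 0$ by viewing the remaining dynamics as non-autonomous \emph{cooperative} systems and invoking a comparison result (Proposition 2.1 of \cite{S95}) against auxiliary dynamics vanishing at the origin; boundedness then follows from $z\to 0$ together with non-negativity. You instead establish invariance of $\Rset_{+}^{4}$ by the elementary quasi-positivity (tangency) criterion on the coordinate faces, which is shorter and avoids monotone-systems machinery, correctly noting that $\mu(0)=0$ is what kills the reaction term on $\{s_{i}=0\}$; for boundedness you use essentially the same aggregation trick as the paper, since your $z_{i}=s_{i}+x_{i}$ is just the translate $s_{\rm in}-$(paper's $z_{i}$), giving an affine system $\dot z=Az+b$ with the same Hurwitz $A$ and equilibrium $(s_{\rm in},s_{\rm in})$, and then $0\le s_{i},x_{i}\le z_{i}$ closes the argument. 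What the paper's heavier route buys is the explicit cascade $(z,s)$ structure and the cooperative form of the $s$- and $x$-subsystems, which are reused later (e.g.\ in the proof of Proposition \ref{prop-stab}); your route buys simplicity for the lemma itself. The only point to make explicit in a polished write-up is the standard hypothesis under which the quasi-positivity criterion applies (local Lipschitz continuity of the right-hand side on a neighborhood of $\Rset_{+}^{4}$), an assumption the paper also uses implicitly.
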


\begin{proof}
Define $z_{i}=s_{\rm in}-s_{i}-x_{i}$ for each tank $i=1,2$ and consider
the dynamics \eqref{sys} in $(z,s)$ coordinates:
\begin{equation}
\label{sys_zs}
\left\{\begin{array}{lll}
\dot z_{1} & = & \ds -\frac{Q}{V_{1}}z_{1}-\frac{d}{V_{1}}(z_{1}-z_{2}),\\[2mm]
\dot s_{1} & = & \ds -\mu(s_{1})(s_{\rm in}-s_{1}-z_{1}) + \frac{Q}{V_{1}}(s_{\rm in}-s_{1})
+\frac{d}{V_{1}}(s_{2}-s_{1}),\\[2mm]
\dot z_{2} & = & \ds -\frac{Q}{V_{2}}(z_{2}-z_{1}),\\[2mm]
\dot s_{2} & = & \ds -\mu(s_{2})(s_{\rm in}-s_{2}-z_{2}) +\frac{d}{V_{2}}(s_{1}-s_{2}).
\end{array}\right.
\end{equation}
This system has a cascade structure with a first independent
sub-system linear in $z$
\begin{equation}
\label{sys_m}
\dot z = \underbrace{\left[\begin{array}{cc}
\ds -\frac{Q+d}{V_{1}} & \ds \frac{d}{V_{1}}\\[4mm]
\ds \frac{d}{V_{2}} & \ds -\frac{d}{V_{2}}
\end{array}\right]}_{\ds A} 
z,
\end{equation}
where one has
\[
\tr (A)=-\frac{Q+d}{V_{1}}-\frac{d}{V_{2}}<0  \quad \mbox{and}
\quad
\det (A)=\frac{Qd}{V_{1}V_{2}}>0.
\]
Therefore the matrix $A$ is Hurwitz and any solution $z$ of
\eqref{sys_m} converges exponentially to $0$.
Then, the solution $s$ can be written as the solution of the non autonomous dynamics
\begin{equation}
\label{sys_st}
\dot s=F(t,s)=\left[\begin{array}{c}
\ds \left(\frac{Q}{V_{1}}-\mu(s_{1})\right)(s_{\rm in}-s_{1}) 
+\frac{d}{V_{1}}(s_{2}-s_{1})+\mu(s_{1})z_{1}(t)\\
\ds
-\mu(s_{2})(s_{\rm in}-s_{2})+\frac{d}{V_{2}}(s_{1}-s_{2})+\mu(s_{2})z_{2}(t)
\end{array}\right].
\end{equation}
Notice that, for any $(t,s)$, one has
\[
\frac{\partial F_{1}(t,s)}{\partial s_{2}}=\frac{d}{V_{1}} > 0 \quad \mbox{and}
 \quad
\frac{\partial F_{2}(t,s)}{\partial s_{1}}=\frac{d}{V_{2}} > 0,
\]
and so the dynamics \eqref{sys_st} is cooperative (see, e.g., \cite{S95}).\\
Define $\check{F}_{1}(t,s):= -\frac{Q}{V_{1}}s_{1}-\mu(s_{1})(s_{\rm in}-s_{1}) 
+\frac{d}{V_{1}}(s_{2}-s_{1}) +\mu(s_{1})z_{1}(t)$, for which it follows that $F_{1}(t,s)>\check{F}_{1}(t,s)$ for any $(t,s)$. 
Proposition 2.1 in~\cite{S95} allows to
state that any solution of \eqref{sys_st} with $s_{i}(0)\geq 0$
($i=1,2$) satisfies $s_{i}(t)\geq \check{s}_{i}(t)$ ($i=1,2)$  for any
$t>0$, where $\check{s}$ is solution of the dynamics
\[
\dot{\check{s}}= \check{F}(t,\check{s})=\left[\begin{array}{l}
\check{F}_{1}(t,\check{s})\\
F_{2}(t,\check{s})
\end{array}\right] , \quad \check{s}(0)=0
\]
As one has $\check{F}(t,0)=0$ for any $t$, the solution $\check{s}$ is identically
null and one obtains that $s_{i}(t)$ ($i=1,2$) stays non-negative for
any positive $t$.

Similarly, $x$ can be written as a solution of a non-autonomous
cooperative dynamics
\[
\dot x = H(t,x)=\left[\begin{array}{c}
\ds \left(\mu(s_{1}(t))-\frac{Q+d}{V_{1}}\right)x_{1}+\frac{d}{V_{1}}x_{2}\\
\ds
\frac{d}{V_{2}}x_{1}+\left(\mu(s_{2}(t))-\frac{d}{V_{2}}\right)x_{2}
\end{array}\right]
\]
with $H(t,0)=0$, which allows to conclude that $x_{i}(t)$ ($i=1,2$)
stays non-negative for any positive $t$.

Finally, the convergence of $z$ to $0$ provides the boundedness of the
solutions $s(t)$, $x(t)$.
\end{proof}

\bigskip

Let us discuss the modeling of the two limiting cases that are not
covered by system \eqref{sys}:
\begin{itemize}
\item $V_{1}=0$. Physically, this corresponds to a single tank (of
  volume $V_{2}$) connected by diffusion to the input pipe with flow rate $Q$
  (see Figure \ref{figdilution}).
\begin{figure}[h]
\begin{center}
\includegraphics[height=3.5cm]{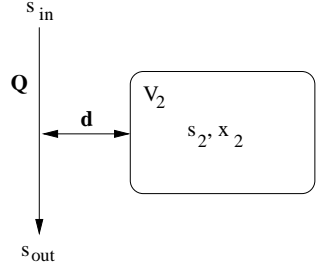}
\caption{Limiting case when $V_{1}=0$.
\label{figdilution}}
\end{center}
\end{figure}
There is no biological activity in the pipe but simply a dilution
given by the mass balance at the connection point:
\begin{equation}
\label{dilution}
\left\{\begin{array}{l}
Q(s_{\rm in}-s_{out})=d(s_{out}-s_{2}) \\
-Qx_{out}=d(x_{out}-x_{2})
\end{array}\right.
\quad \Rightarrow \quad
s_{out}=\frac{Qs_{\rm in}+ds_{2}}{Q+d} , \;
x_{out}=\frac{dx_{2}}{Q+d}
\end{equation}
Then the dynamics in the tank with $(s_{out},x_{out)}$ instead of
$(s_{1},x_{1})$ is given by the equations
\[
\left\{\begin{array}{lll}
\dot s_{2} & = & \ds -\mu(s_{2})x_{2}
+\frac{Qd}{(Q+d)V_{2}}(s_{\rm in}-s_{2})\\[4mm]
\dot x_{2} & = & \ds \mu(s_{2})x_{2}-\frac{Qd}{(Q+d)V_{2}}x_{2}
\end{array}\right.
\]
This is equivalent to have a single tank of volume $V_{2}$ with input flow
rate $Qd/(Q+d)$ but with an output given by $s_{out}=(Qs_{\rm in}+ds_{2})/(Q+d)$.
Equivalently, one can consider the system \eqref{sys} as a slow-fast
dynamics when the parameter $\epsilon=V_{1}$ is small. Then the fast
dynamics is
\[
\left\{\begin{array}{lll}
\epsilon \dot s_{1} & = & -\epsilon\mu(s_{1})x_{1}+Q(s_{\rm in}-s_{1})+d(s_{2}-s_{1})\\
\epsilon \dot x_{1} & = & \epsilon\mu(s_{1})x_{1}-Qx_{1}+d(x_{2}-x_{1})
\end{array}\right.
\]
and the slow manifold is given exactly by the system of equations
\eqref{dilution} with $(s_{1},x_{1})=(s_{out},x_{out})$.

\item $V_{2}=0$. The dynamics of $(s_{1},x_{1})$ is the one of the
  single chemostat model with volume $V_{1}$
\begin{equation}
\label{sysV1}
\left\{\begin{array}{lll}
\dot s_{1} & = & \ds -\mu(s_{1})x_{1} + \frac{Q}{V_{1}}(s_{\rm in}-s_{1})\\[2mm]
\dot x_{1} & = & \ds \mu(s_{1})x_{1}-\frac{Q}{V_{1}}x_{1}
\end{array}\right.
\end{equation}
This is also equivalent to having no diffusion ($d=0$) between the tanks.
\end{itemize}

\section{Study of the equilibria}\label{sec:equilibria}

For the analysis of the steady sates, it is convenient to introduce the function
\begin{equation}
\label{defbeta}
\beta(s)=\mu(s)(s_{\rm in}-s)
\end{equation}
that verifies the following property.
\begin{lemma}
\label{lemma1}
Under Hypothesis \ref{H1}, the function $\beta$
is strictly concave on $[0,s_{\rm in}]$. Thus, one can define the unique value
\begin{equation}\label{eq:shat}
\hat{s}= \arg \max_{s\in (0,s_{\rm in})} \beta(s).
\end{equation}
\end{lemma}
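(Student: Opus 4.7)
The plan is to establish strict concavity of $\beta$ on $[0,s_{\rm in}]$ by showing $\beta''<0$, and then deduce existence and uniqueness of the maximizer from compactness together with the boundary behaviour $\beta(0)=\beta(s_{\rm in})=0$.

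First I would differentiate $\beta(s) = \mu(s)(s_{\rm in}-s)$ twice:
\[
\beta'(s) = \mu'(s)(s_{\rm in}-s) - \mu(s), \qquad
\beta''(s) = \mu''(s)(s_{\rm in}-s) - 2\mu'(s).
\]
Under Hypothesis~\ref{H1}, concavity of $\mu$ gives $\mu''\le 0$, and $s_{\rm in}-s\ge 0$ on $[0,s_{\rm in}]$, so the first term is nonpositive. Because $\mu$ is (strictly) increasing one has $\mu'(s)>0$, whence $-2\mu'(s)<0$. Combining the two contributions yields $\beta''(s)<0$ throughout $[0,s_{\rm in}]$, i.e.\ strict concavity of $\beta$.

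For the maximizer, continuity of $\beta$ on the compact interval $[0,s_{\rm in}]$, together with $\beta(0)=\mu(0)\,s_{\rm in}=0$, $\beta(s_{\rm in})=\mu(s_{\rm in})\cdot 0=0$, and $\beta(s)>0$ for $s\in(0,s_{\rm in})$ (both factors being positive there), implies that the maximum is attained inside $(0,s_{\rm in})$; strict concavity then forces uniqueness, so $\hat{s}$ is well defined.

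The only subtle point is that the hypothesis only provides $\mu''\le 0$, not a strict inequality, so the strictness of $\beta''<0$ cannot come from the concavity term alone; it is essential that $\mu$ be strictly increasing (as the Monod law is) to guarantee $-2\mu'(s)<0$. If one wanted to weaken smoothness assumptions, the same conclusion could be obtained directly from the convex-combination identity, by expanding $\beta((1-\lambda)s_1+\lambda s_2) - (1-\lambda)\beta(s_1) - \lambda\beta(s_2)$ and observing that the excess reduces to $\lambda(1-\lambda)(s_1-s_2)(\mu(s_1)-\mu(s_2))$ plus a nonnegative term coming from concavity of $\mu$, with strict positivity as soon as $s_1\ne s_2$.
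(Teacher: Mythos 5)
Your proof is correct and follows essentially the same route as the paper, which also computes $\beta'(s)=\mu'(s)(s_{\rm in}-s)-\mu(s)$ and $\beta''(s)=\mu''(s)(s_{\rm in}-s)-2\mu'(s)$ and concludes negativity from $\mu''\le 0$ and $\mu'>0$. Your added remarks on why the maximizer is interior and unique, on the role of strict monotonicity of $\mu$, and the optional convex-combination identity are sound supplements to the paper's terser argument.
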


\begin{proof}
One has $\beta'(s)=\mu'(s)(s_{\rm in}-s)-\mu(s)$ and $\beta''(s)=\mu''(s)(s_{\rm in}-s)-2\mu'(s)$, which is negative for any $ s
  \in [0,s_{\rm in}]$.
\end{proof}

One can check that the {\em washout} state
$E^0=(0,s_{\rm in},0,s_{\rm in})^\top$ is always a steady-state of the
dynamics \eqref{sys}. In the next Propositions, we characterize the
existence of other equilibrium and their global stability.

\begin{proposition}
\label{prop-eq}
The washout equilibrium $E^0$ is the unique
steady state of \eqref{sys} exactly when $s_{\rm in}$ satisfies the condition
\begin{equation}
\label{cond-wo}
\mu(s_{\rm in}) \leq \frac{Q}{V_{1}} \mbox{ and } P(\mu(s_{\rm in})) \geq 0
\end{equation}
where $P$ is defined as
\begin{equation*}
P(X)=V_{1}V_{2}X^2-(dV_{1}+(Q+d)V_{2})X+dQ
\end{equation*}
When the condition \eqref{cond-wo} is not fulfilled, there exists an
unique positive steady state $E^{\star}$ of \eqref{sys} distinct from $E^0$.
\end{proposition}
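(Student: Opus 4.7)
The plan is to reduce the equilibrium problem to a one-dimensional equation $G(s_2)=0$ and analyze its zeros via strict monotonicity. At any equilibrium of \eqref{sys}, in the $(z,s)$ coordinates of the proof of Lemma~\ref{lem_inv} one has $Az=0$, and the Hurwitz property of $A$ gives $z_1=z_2=0$, i.e.\ $x_i=s_{\rm in}-s_i$. The equilibrium system then reduces to (A)~$V_1\beta(s_1)+V_2\beta(s_2)=Q(s_{\rm in}-s_1)$ and (B)~$V_2\beta(s_2)=d(s_1-s_2)$ in $(s_1,s_2)\in[0,s_{\rm in}]^2$. Using (B) to express $s_1=\phi(s_2):=s_2+V_2\beta(s_2)/d$ and the identity $s_{\rm in}-\phi(s_2)=(s_{\rm in}-s_2)(d-V_2\mu(s_2))/d$, one factors the washout root $s_2=s_{\rm in}$ out of (A), so that non-washout equilibria are exactly the zeros of
\[
G(s_2):=V_1\mu(\phi(s_2))(d-V_2\mu(s_2))+V_2(Q+d)\mu(s_2)-Qd
\]
lying in the feasibility set $F:=\{s_2\in(0,s_{\rm in}):V_2\mu(s_2)<d,\ V_1\mu(\phi(s_2))<Q+d\}$, which is precisely the range on which the recovered $x_1$ and $x_2$ are strictly positive.

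The key step is to prove that $G$ is strictly increasing on $F$. A direct differentiation yields
\[
G'(s_2)=V_1\mu'(\phi(s_2))\phi'(s_2)(d-V_2\mu(s_2))+V_2\mu'(s_2)(Q+d-V_1\mu(\phi(s_2))),
\]
and on $F$ both parenthesized factors are strictly positive by definition, $\mu'>0$ by Hypothesis~\ref{H1}, and $\phi'>0$ since $\phi$ is strictly concave (from $\beta''<0$) and a short computation gives $\phi'\geq 0$ at the right endpoint $\bar s$ of $F$. Hence $G$ has at most one zero in $F$.

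It remains to evaluate $G$ on $\partial F$. One has $G(0)=-Qd<0$, and either $G(\bar s)=-P(\mu(s_{\rm in}))$ (when $\bar s=s_{\rm in}$, i.e.\ $V_2\mu(s_{\rm in})\leq d$ and $V_1\mu(s_{\rm in})\leq Q+d$) or $G(\bar s)=d^2>0$ (in the remaining sub-cases, where saturation of a feasibility inequality collapses the algebra to $(Q+d)d-Qd=d^2$). Noting that \eqref{cond-wo} is equivalent to $\mu(s_{\rm in})\leq X_-$, the smaller root of $P$, which lies strictly below both $Q/V_1$ and $d/V_2$, one sees that \eqref{cond-wo} forces $\bar s=s_{\rm in}$ with $G(\bar s)\leq 0$; strict monotonicity then yields $G<0$ on $F$, so the washout is the unique equilibrium. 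When \eqref{cond-wo} fails, a brief case analysis gives $G(\bar s)>0$, and strict monotonicity combined with the intermediate value theorem produces the unique positive equilibrium $E^\star$. The most delicate point is the correct identification of $F$ together with the verification that $\phi'>0$ on it; once those are done, the dichotomy follows from elementary boundary evaluation.
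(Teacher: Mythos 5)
Your argument is correct, but it follows a genuinely different route from the paper. The paper also eliminates one variable ($s_{1}=\phi_{2}(s_{2})$, your $\phi$), but it keeps the washout root inside the scalar equation: it inverts $\phi_{1}$, forms $\gamma(s_{2})=\phi_{2}(s_{2})-\phi_{1}^{-1}(s_{2})$, and exploits \emph{strict concavity} of $\gamma$ (so $\gamma$ has at most two zeros, one being $s_{\rm in}$), with existence decided either by the sign of $\gamma(\lambda_{1}(s_{\rm in}))$ when $\mu(s_{\rm in})>Q/V_{1}$ or by $\gamma'(s_{\rm in})<0\Leftrightarrow\phi_{1}'(s_{\rm in})\phi_{2}'(s_{\rm in})<1\Leftrightarrow P(\mu(s_{\rm in}))<0$ otherwise. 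You instead factor the washout root $(s_{\rm in}-s_{2})$ out explicitly and study the reduced function $G$, replacing concavity by \emph{strict monotonicity} of $G$ on the feasible interval plus a boundary sign evaluation; this unifies the paper's two cases into one dichotomy ($G(\bar s)=-P(\mu(s_{\rm in}))$ when $\bar s=s_{\rm in}$, $G(\bar s)=d^{2}>0$ otherwise) and makes $P$ appear directly as a boundary value, which is arguably cleaner. What the paper's route buys in exchange is that the quantities it manipulates ($\phi_{1}'$, $\phi_{2}'$, $\gamma'$ at $s^{\star}$ and $s^{0}$) are exactly those reused later in the stability proof (Proposition~\ref{prop-stab}) and in Proposition~\ref{prop:s1d}, so the concavity bookkeeping is not wasted. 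Two small points you should tighten when writing this up: (a) the second inequality in your set $F$, $V_{1}\mu(\phi(s_{2}))<Q+d$, is not needed for positivity of $x_{i}=s_{\rm in}-s_{i}$ (it is automatically satisfied, indeed with $Q$ in place of $Q+d$, at any zero of $G$ with $V_{2}\mu(s_{2})<d$); keeping it is harmless and helps your sign argument for $G'$, but justify it that way (or via the recovery $x_{2}=\frac{Q+d-V_{1}\mu(s_{1})}{d}x_{1}$), and note that $\phi'>0$ on $F$ follows directly from $V_{2}\mu(s_{2})<d$ without invoking concavity; (b) in the case $\bar s=s_{\rm in}$ with \eqref{cond-wo} violated, concluding $G(s_{\rm in})>0$ uses that $\mu(s_{\rm in})\le d/V_{2}$ lies strictly below the larger root of $P$, which follows from $P(d/V_{2})=-d^{2}<0$ (the same computation giving $X_{-}<d/V_{2}$); make that explicit so the ``brief case analysis'' is complete.
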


\begin{proof}
From the two last equations of \eqref{sys}, one has
$s_{1}+x_{1}=s_{2}+x_{2}$ at steady-state, and from the two first
ones $s_{1}+x_{1}=s_{\rm in}$. The values $s_{1}$, $s_{2}$ at
steady state are solutions of the system of two equations
\begin{eqnarray}
\label{eq1}
0 & = & \left(\frac{Q}{V_{1}}-\mu(s_{1})\right)(s_{\rm in}-s_{1})
+\frac{d}{V_{1}}(s_{2}-s_{1})\\
\label{eq2}
0 & = & -\mu(s_{2})(s_{\rm in}-s_{2})+\frac{d}{V_{2}}(s_{1}-s_{2})
\end{eqnarray}
and $x_{1}$, $x_{2}$ at steady state are uniquely defined from each
solution $(s_{1},s_{2})$ of \eqref{eq1}-\eqref{eq2}.

Clearly $(s_{\rm in},s_{\rm in})$ is a solution of \eqref{eq1}-\eqref{eq2}.
We look for (positive) solutions different to $(s_{\rm in},s_{\rm in})$.

Posit
\begin{equation*}
\lambda_{1}(s_{\rm in}):=\max \left\{ s_{1} \in [0,s_{\rm in}] \, \vert \,
\mu(s_{1})\leq \frac{Q}{V_{1}} \right\}
\end{equation*}

From equations (\ref{eq1})-(\ref{eq2}), a solution different to
$(s_{\rm in},s_{\rm in})$ has to verify $s_{1}>s_{2}>0$ and then from equation
(\ref{eq1}), one has also $s_{1}<\lambda_{1}(s_{\rm in})$.
Define then the functions:
\begin{eqnarray*}
\phi_{1}(s_{1}) & := & s_{1}-\frac{Q-V_{1}\mu(s_{1})}{d}(s_{\rm in}-s_{1})
=s_{1}-\frac{Q}{d}(s_{\rm in}-s_{1})+\frac{V_{1}}{d}\beta(s_{1}),\\
\phi_{2}(s_{2}) & := & s_{2}+\frac{V_{2}\mu(s_{2})}{d}(s_{\rm in}-s_{2})
=s_{2}+\frac{V_{2}}{d}\beta(s_{2}).
\end{eqnarray*}
It is straightforward to see that any solution of (\ref{eq1})-(\ref{eq2}) fulfills
$s_{2}=\phi_{1}(s_{1})$ and $s_{1}=\phi_{2}(s_{2})$. One has
\begin{equation*}
\phi_{1}'(s_{1})=1+\frac{V_{1}}{d}\mu'(s_{1})(s_{\rm in}-s_{1})+\frac{Q-V_{1}\mu(s_{1})}{d}.
\end{equation*}
Therefore $\phi_{1}$ is increasing on $[0,\lambda_{1}(s_{\rm in})]$, with
$\phi_{1}(0)=-(Q/d)s_{\rm in}<0$ and
$\phi_{1}(\lambda_{1}(s_{\rm in}))=\lambda_{1}(s_{\rm in})>0$. Thus, $\phi_{1}$ is
invertible on $[-(Q/d)s_{\rm in},\lambda_{1}(s_{\rm in})]$ with
\begin{equation*}
\phi_{1}^{-1}(0) \in (0,\lambda_{1}(s_{\rm in})) .
\end{equation*}
From Lemma \ref{lemma1}, it follows that $\phi_{1}$ and $\phi_{2}$ are
strictly concave functions on $[0,s_{\rm in}]$.
Consider then the function
\begin{equation*}
\gamma(s_{2})=\phi_{2}(s_{2})-\phi_{1}^{-1}(s_{2}) \quad s_{2} \in [0,s_{\rm in}],
\end{equation*}
which is also strictly concave on $[0,s_{\rm in}]$.
Then, a solution $(s_{1},s_{2})$ can be written as a solution of
\begin{equation*}
\gamma(s_{2})=0 ,  \quad s_{1}=\phi_{2}(s_{2}) \quad \mbox{ with }
s_{2} \in [0,\lambda_{1}(s_{\rm in})].
\end{equation*}
Notice that one has $\gamma(s_{\rm in})=0$, and as $\gamma$ is strictly concave, it cannot have more than two zeros. Therefore there is at most one
solution $(s_{1},s_{2})$ different to $(s_{\rm in},s_{\rm in})$.
Furthermore, one has $\gamma(0)=-\phi_{1}^{-1}(0)<0$. Now, distinguish two different cases:

\medskip

$\bullet$ When $\lambda_{1}(s_{\rm in})<s_{\rm in}$ (or equivalently  $\mu(s_{\rm in})>Q/V_{1}$), one has
\begin{equation*}
\gamma(\lambda_{1}(s_{\rm in}))=\frac{QV_{2}}{dV_{1}}(s_{\rm in}-\lambda_{1}(s_{\rm in}))>0.
\end{equation*}
By using the Mean Value Theorem, one concludes that there exists $s_{2} \in
(0,\lambda_{1}(s_{\rm in}))$ such that $\gamma(s_{2})=0$.

\medskip

$\bullet$ When $\lambda_{1}(s_{\rm in})=s_{\rm in}$ (that is when $\mu(s_{\rm in})\leq Q/V_{1}$), 
the function $\gamma$ takes
positive value on the interval $[0,s_{\rm in}]$ if and only if
$\gamma'(s_{\rm in})<0$ ($\gamma$ being strictly concave on $[0,s_{\rm in}]$), 
or equivalently when the condition
\begin{equation*}
\phi_{2}'(s_{\rm in})<\frac{1}{\phi_{1}'(s_{\rm in})}
\end{equation*}
is fulfilled. Notice that one has $\phi_{1}'(s_{\rm in})>0$
because $\lambda_{1}(s_{\rm in})=s_{\rm in}$. So the condition can be
also written as
\begin{equation*}
\phi_{1}'(s_{\rm in})\phi_{2}'(s_{\rm in})<1 .
\end{equation*}
From the expressions of $\phi_{1}$ and $\phi_{2}$, one can write this
condition as
\begin{equation*}
\frac{(d+Q-V_{1}\mu(s_{\rm in}))(d-V_{2}\mu(s_{\rm in}))}{d^2}<1,
\end{equation*}
and easily check that this amounts to require $s_{\rm in}$ to satisfy
$P(\mu(s_{\rm in}))<0$.

\medskip We conclude that there exists a positive steady state if and
only if $\mu(s_{\rm in})>Q/V_{1}$ or $P(\mu(s_{\rm in}))<0$ and that this
steady state (when it exists) is unique.
\end{proof}

\begin{proposition}
\label{prop-stab}
When the washout equilibrium $E^0$ is the unique steady state, it is
globally asymptotically stable on $\Rset_{+}^4$.

When the positive steady state $E^\star$ exists, for any initial
condition except on a set of null measure, the solution of
\eqref{sys} converges asymptotically to $E^\star$, which is moreover
locally exponentially stable.
\end{proposition}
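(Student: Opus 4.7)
The plan is to exploit the cascade structure established in the proof of Lemma \ref{lem_inv}. In the $(z,s)$ coordinates with $z_i=s_{\rm in}-s_i-x_i$, the $z$-subsystem is linear with Hurwitz matrix $A$, so $z(t)\to 0$ exponentially and the full system becomes asymptotically autonomous. On the invariant manifold $\{z=0\}$, where $x_i=s_{\rm in}-s_i$, the dynamics reduces to the planar system
\[
\dot s_1=\left(\frac{Q}{V_1}-\mu(s_1)\right)(s_{\rm in}-s_1)+\frac{d}{V_1}(s_2-s_1),\quad \dot s_2=-\mu(s_2)(s_{\rm in}-s_2)+\frac{d}{V_2}(s_1-s_2),
\]
which is cooperative (the cross partial derivatives $d/V_1$ and $d/V_2$ are strictly positive, as in the proof of Lemma \ref{lem_inv}) and whose equilibria correspond bijectively, via $(s_1,s_2)\mapsto(s_1,s_{\rm in}-s_1,s_2,s_{\rm in}-s_2)$, to those of \eqref{sys} characterized in Proposition \ref{prop-eq}.

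For the reduced planar cooperative system, Hirsch's theorem (or equivalently Poincar\'e--Bendixson combined with the absence of non-trivial periodic orbits in planar monotone dynamics) ensures that every bounded trajectory converges to an equilibrium. Invoking Thieme/Markus-type results on asymptotically autonomous systems, the $\omega$-limit set of any solution of \eqref{sys} is therefore a single equilibrium, either $E^0$ or, when it exists, $E^\star$.

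When $E^0$ is the unique equilibrium, every trajectory must converge to it, giving global attraction. Lyapunov stability then follows from the block upper-triangular structure of $J(E^0)$: the $s$-block coincides with the Hurwitz matrix $A$, and the $x$-block $J_x$ satisfies $\det J_x=P(\mu(s_{\rm in}))/(V_1V_2)\geq 0$ and $\mathrm{tr}\,J_x=2\mu(s_{\rm in})-(Q+d)/V_1-d/V_2<0$ under \eqref{cond-wo} (because $\mu(s_{\rm in})\leq Q/V_1$ forces $\mu(s_{\rm in})$ to lie below the midpoint of the two roots of $P$); thus no eigenvalue of $J(E^0)$ has positive real part, and combined with global attraction this yields GAS on $\Rset_+^4$.

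When $E^\star$ exists, local exponential stability is obtained by linearizing the reduced planar system at $(s_1^\star,s_2^\star)$: the diagonal entries are $-(Q+d)/V_1-\beta'(s_1^\star)$ and $-d/V_2-\beta'(s_2^\star)$, the off-diagonals are $d/V_1,d/V_2$, and a direct computation yields
\[
\det J^{\rm red}(s^\star)=\frac{d^2}{V_1V_2}\left(\phi_1'(s_1^\star)\phi_2'(s_2^\star)-1\right).
\]
The key step is to extract from the proof of Proposition \ref{prop-eq} that the strictly concave function $\gamma$ has exactly two zeros $s_2^\star<s_{\rm in}$, whence $\gamma'(s_2^\star)>0$ --- equivalent to $\phi_1'(s_1^\star)\phi_2'(s_2^\star)>1$ --- and therefore $\det J^{\rm red}(s^\star)>0$; since $\phi_1'(s_1^\star),\phi_2'(s_2^\star)>0$, one also obtains $\mathrm{tr}\,J^{\rm red}(s^\star)<0$, so $J^{\rm red}(s^\star)$ is Hurwitz and, combined with the $z$-block, so is $J(E^\star)$. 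Concerning $E^0$, the existence of $E^\star$ forces either $P(\mu(s_{\rm in}))<0$ (so $J_x$ has negative determinant and $E^0$ is a saddle with 3D stable manifold) or $\mu(s_{\rm in})>Q/V_1$ with $P>0$ (so $J_x$ has positive determinant and positive trace, making $E^0$ repelling in the $x$-directions with 2D stable manifold). In both sub-cases the stable set of $E^0$ is an immersed $C^1$ submanifold of $\Rset_+^4$ of dimension at most three, hence of zero Lebesgue measure, while every trajectory outside it must converge, by the argument above, to $E^\star$. The main difficulty I expect is the borderline case $P(\mu(s_{\rm in}))=0$ with $\mu(s_{\rm in})>Q/V_1$, where $J(E^0)$ has a zero eigenvalue and the stable manifold theorem does not apply directly; in this degenerate situation one must combine a center-manifold argument with the strong monotonicity of the reduced planar flow to still trap the basin of $E^0$ inside a set of measure zero.
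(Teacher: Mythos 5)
Your overall architecture parallels the paper's: the cascade in $(z,s)$, asymptotic autonomy, convergence of the planar limiting system to equilibria, lifting to the 4D system via the asymptotically autonomous theory, the trace/determinant computation with $\phi_1'(s_1^\star)\phi_2'(s_2^\star)>1$ deduced from $\gamma'(s_2^\star)>0$ for local exponential stability of $E^\star$, and measure-zero stable sets of $E^0$ in the hyperbolic cases. Two of your deviations are fine: you invoke Hirsch's convergence theorem for planar cooperative systems where the paper constructs an explicit Dulac function in the coordinates $\sigma_i=\log(s_{\rm in}-s_i)$, and you analyze $J(E^0)$ blockwise in the original variables (the matrix $A$ plus the biomass block $J_x$ with $\det J_x=P(\mu(s_{\rm in}))/(V_1V_2)$ and $\mathrm{tr}\,J_x<0$ under \eqref{cond-wo}), which is a correct computation.

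There is, however, a genuine gap exactly where you flag it: the case $\mu(s_{\rm in})>Q/V_1$ with $P(\mu(s_{\rm in}))=0$, where $E^0$ is not hyperbolic. You announce that ``a center-manifold argument combined with strong monotonicity'' should trap the basin of $E^0$ in a null set, but you do not carry this out, so the a.e.-convergence claim is unproved in this regime as written. The paper closes this case completely and by a different device: assuming a solution with positive biomass converges to $E^0$, it introduces $v=\min(\tilde x,x_1)$ with $\tilde x=(V_1x_1+V_2x_2)/(V_1+V_2)$ and shows, using $\mu(s_1(t))-Q/V_1>\eta>0$ for large $t$, that $v$ is eventually nondecreasing and positive, contradicting washout; this gives convergence to $E^\star$ from \emph{every} initial condition with nonzero biomass whenever $\mu(s_{\rm in})>Q/V_1$, hyperbolic or not, which is stronger than your measure-theoretic conclusion. (Your sketch is repairable -- the basin of $E^0$ lies in the global center-stable set, a countable union of images of a codimension-one immersed manifold, hence null -- but this must actually be argued; it is not automatic.) Two smaller points: the Markus/Thieme step requires more than ``every limiting trajectory converges to an equilibrium''; one must also exclude cyclic chains formed by $s^0$, $s^\star$ and connecting orbits, which the paper's Dulac function rules out and Hirsch's theorem alone does not. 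And your final inference ``global attraction plus no eigenvalue with positive real part yields GAS'' is not a valid implication in general (attractivity does not imply Lyapunov stability, and the possible zero eigenvalue of $J_x$ blocks a pure linearization argument), although the paper itself leaves this stability detail implicit.
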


\begin{proof}
As shown in the proof of Lemma \ref{lem_inv}, the dynamics \eqref{sys} has a
cascade structure in $(z,s)$ coordinates, where $z$ converges
exponentially to $0$. Then $s$ converges to the set ${\mathcal
  S}=[0,s_{\rm in}]\times[0,s_{\rm in}]$ and is solution of the non-autonomous system
\eqref{sys_st} in the plane, which is asymptotically autonomous with
limiting dynamics
\begin{equation}
\label{sys_s}
\dot s=F_{a}(s)=\left[\begin{array}{c}
\ds \left(\frac{Q}{V_{1}}-\mu(s_{1}\right)(s_{\rm in}-s_{1}) 
+\frac{d}{V_{1}}(s_{2}-s_{1})\\
\ds
-\mu(s_{2})(s_{\rm in}-s_{2})+\frac{d}{V_{2}}(s_{1}-s_{2})
\end{array}\right]
\end{equation}
We study now the asymptotic behavior of dynamics \eqref{sys_s} in the
domain ${\cal S}$. Denote by ${\mathcal B}=\{s_{1}=s_{\rm in}\}\cup\{s_{2}=s_{\rm in}\}$ the subset of its boundary.
Accordingly to Proposition \ref{prop-eq}, this dynamics has
$s^{0}=(s_{\rm in},s_{\rm in})$ (which is the projection of $E^0$ on the $s$-plane) as
an equilibrium, and at most another equilibrium $s^{\star}$ (which is the
projection of $E^{\star}$ on the $s$-plane) that has to belong to ${\mathcal
  S}\setminus{\mathcal B}$.
On this last subset, we consider the
variables $\sigma_{i}=\log(s_{\rm in}-s_{i})$, whose dynamics are given by
\begin{equation}
\label{dynsigma}
\dot\sigma = \tilde F_{a}(\sigma)=
\left[\begin{array}{c}
\ds -\frac{Q}{V_{1}}+\mu(s_{\rm in}-e^{\sigma_{1}})-\frac{d}{V_{1}}\left(1-e^{\sigma_{2}-\sigma_{1}}\right)\\[4mm]
\ds \mu(s_{\rm in}-e^{\sigma_{2}})-\frac{d}{V_{2}}\left(1-e^{\sigma_{1}-\sigma_{2}}\right)
\end{array}\right]
\end{equation}
One obtains
\[
\mbox{div}(\tilde
F_{a})=-\mu'(s_{\rm in}-e^{\sigma_{1}})e^{\sigma_{1}}-\mu'(s_{\rm in}-e^{\sigma_{2}})e^{\sigma_{2}}-\frac{d}{V_{1}}e^{\sigma_{2}-\sigma_{1}}-\frac{d}{V_{2}}e^{\sigma_{1}-\sigma_{2}}
<0
\]
From Dulac criteria and Poincar\'e-Bendixon theorem (see for instance \cite{P01}), 
we conclude that bounded trajectories of (\ref{dynsigma}) cannot have limit cycle or closed path and necessarily converge to an equilibrium point.
Consequently, any trajectory of (\ref{sys_s}) in ${\mathcal S}$
either converges to the equilibrium $s^{\star}$ (if it exists)
or approaches the boundary ${\mathcal B}$. But
\[
\mbox{if there exists }t \mbox{ such that }\; s_{i}(t)=s_{\rm in} \, \mbox{ and }\, s_{j}(t)<s_{\rm in}, \; \mbox{ then } \;
\dot s_{i}(t)<0 \qquad (i\neq j)
\]
and so the only possibility for approaching ${\mathcal B}$ is to converge to $s^0$. 
This shows that the only 
non-empty closed connected invariant chain recurrent 
subsets of ${\mathcal S}$ are the isolated points $s^0$ and $s^{\star}$.\\

We use the theory of asymptotically autonomous systems 
(see Theorem 1.8 in \cite{MST95}) to deduce that any trajectory of system \eqref{sys} in $\Rset_{+}^4$ converges asymptotically to $E^0$ or
$E^{\star}$.\\

Due to the cascade structure of the dynamics \eqref{sys}
that is made explicit in the proof of Lemma \ref{lem_inv}, the Jacobian matrix
in the $(z,s)$ coordinates is
\[
J(s)=\left[\begin{array}{cc}
A & 0\\
\star & J_{a}(s)
\end{array}\right] \quad \mbox{with} \quad
J_{a}(s)=\left[\begin{array}{cc}
\ds -\frac{d}{V_{1}}\phi_{1}'(s_{1}) & \ds \frac{d}{V_{1}}\\[4mm]
\ds \frac{d}{V_{2}} & \ds  -\frac{d}{V_{2}}\phi_{2}'(s_{2})
\end{array}\right]
\]
where the matrix $A$ defined in \eqref{sys_m} is Hurwitz.
Thus, the eigenvalues of $J(s)$ are the ones of the matrix $A$ plus
the ones of $J_{a}(s)$.
One has
\[
\tr(J_{a}(s))=-d\left(\frac{\phi_{1}'(s_{1})}{V_{1}}+\frac{\phi_{2}'(s_{2})}{V_{2}}\right)
\quad \mbox{and} \quad
\det(J_{a}(s))=\frac{d^2}{V_{1}V_{2}}\left(\phi_{1}'(s_{1})\phi_{2}'(s_{2})-1\right)
.
\]

Accordingly to Proposition \ref{prop-eq}, the equilibrium $E^\star
\neq E^0$ exists when $P(\mu(s_{\rm in}))>0$ or $\mu(s_{\rm in})>Q/V_{1}$.

$\bullet$ When $P(\mu(s_{\rm in}))>0$, one has $\phi_{1}'(s_{\rm in})\phi_{2}'(s_{\rm in})<1$
or equivalently $\det(JF_{a}(s^0))<0$. Then $E^0$ is a saddle point
(with a stable manifold of dimension one) and we deduce that almost any
trajectory of \eqref{sys} converges to $E^\star$.

$\bullet$ When $\mu(s_{\rm in})>Q/V_{1}$, notice that the equilibrium $E^0$ is not necessarily
hyperbolic (as one can have $P(\mu(s_{\rm in}))=0$ which implies
  then $\det(JF_{a}(s^0))=0$) and we cannot conclude its stability properties directly. We
  setup a proof by contradiction, inspired by \cite{HRG11}.
Consider a solution of (\ref{sys_s}) with initial condition different
to $E^0$ that converges asymptotically to $E^0$, and
define the function
\[
v(t)=\min(\tilde x(t),x_{1}(t))
\mbox{ with } \tilde x=\frac{V_{1}x_{1}+V_{2}x_{2}}{V_{1}+V_{2}},
\]
which verifies $v(t)>0$ for any $t>0$ and has to converge to $0$ when
$t$ tends to $+\infty$.
The condition $\mu(s_{\rm in})>Q/V_{1}$ implies the existence of positive
numbers $T$ and $\eta$ such that $\mu(s_{1}(t))-Q/V_{1}>\eta$ for any $t>T$.
If $x_{1}(t) \leq x_{2}(t)$ with $t>T$, one has $v=x_{1}$ and
\[
\dot x_{1}\geq \left(\mu(s_{1}(t))-\frac{Q}{V_{1}}\right)x_{1}
\geq \eta x_{1} \geq 0
\]
If $x_{1}(t) \geq x_{2}(t)$ with $t>T$, one has $v=\tilde x$ and
\[
\dot{\tilde x} =
\frac{V_{1}}{V_{1}+V_{2}}\left(\mu(s_{1}(t))-\frac{Q}{V_{1}}\right)x_{1}
+ \frac{V_{2}}{V_{1}+V_{2}}\mu(s_{2}(t))x_{2}
\geq \frac{V_{1}}{V_{1}+V_{2}}\eta x_{1} \geq 0
\]
The positive function $v$ is thus non decreasing for $t>T$, in contradiction
with its convergence to $0$. We conclude
that any solution of \eqref{sys} with initial condition different to
$E^0$ converges to $E^\star$.\\

Finally, when the equilibrium $E^\star$ exists, the analysis conducted in the proof of Proposition \ref{prop-eq} allows us to deduce the inequalities $\phi_{1}'(s_{1}^\star)>0$ and $\gamma'(s_{2}^\star)>0$, which in turn imply $\phi_{1}'(s_{1}^\star)\phi_{2}'(s_{2}^\star)>1$  and so $\phi_{2}'(s_{2}^\star)>0$. Then, one has $\tr(J_{a}(s^\star))<0$ and 
$\det(J_{a}(s^\star))>0$ i.e. $J(s^\star)$ is Hurwitz, which proves
that the attractive equilibrium $E^\star$ is also locally exponentially stable.
\end{proof}

\section{Influence of lateral diffusion on the performances}\label{sec:performances}
Here, we investigate conditions under which having a second
compartment (as proposed in Section \ref{sec:modeling}) is beneficial
for the yield conversion compared to the chemostat model with a single
compartment of the same total volume. To this aim, we fix the hydric
volumes $V_{1}$ and $V_{2}$, the input flow $Q$, and analyze the
output map at steady state, as function of the diffusion parameter $d$. The benefits of the structured chemostat in terms of resource conversion are discussed in Section \ref{sec:Interpretation}.\\

Proposition \ref{prop-eq} defines properly the map $d \rightarrow
s_{1}^{\star}(d)$ for the unique non-trivial steady-state of system
\eqref{sys}, that we study here as a function of $d$. We start by deducing the range of existence of this steady-state.
\begin{proposition}\label{prop:s1dinit}
Let $V=V_{1}+V_{2}$ and $\bar{d}= V_{2}\mu(s_{\rm in}) \frac{Q-V_{1}\mu(s_{\rm in})}{Q-(V_{1}+V_2)\mu(s_{\rm in})}$. It follows that:  
\begin{enumerate}[(i)]
\item If $\mu(s_{\rm in}) < Q/V$, then the non-trivial equilibrium $s_{1}^{\star}(d)<s_{\rm in}$ exists when $d\in(0,\bar{d})$.
\item If $Q/V\leq \mu(s_{\rm in}) \leq Q/V_{1}$, then the non-trivial equilibrium $s_{1}^{\star}(d)<s_{\rm in}$ exists when $d>0$.
\item If $\mu(s_{\rm in}) > Q/V_{1}$, then the non-trivial equilibrium $s_{1}^{\star}(d)<s_{\rm in}$ exists when $d\geq 0$.
\end{enumerate}
\end{proposition}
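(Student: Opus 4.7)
The plan is to specialize Proposition~\ref{prop-eq}, which states that a (necessarily unique) non-trivial equilibrium $s_{1}^\star(d)<s_{\rm in}$ of \eqref{sys} exists if and only if condition~\eqref{cond-wo} fails, i.e.\ either $\mu(s_{\rm in})>Q/V_{1}$ or $P(\mu(s_{\rm in}))<0$. Setting $X:=\mu(s_{\rm in})$, the first step is to note that, with $X$ fixed and $d$ varying, the quadratic $P(X)$ is \emph{affine} in $d$:
\[
P(X)=V_{2}X(V_{1}X-Q)+d\,(Q-VX),
\]
so its constant term has the sign of $V_{1}X-Q$ and its slope has the sign of $Q-VX$. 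Since $Q/V\le Q/V_{1}$, the three items of the proposition correspond precisely to the three consistent sign patterns for these two quantities.

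Next I would dispatch the cases in turn. Case (iii), $\mu(s_{\rm in})>Q/V_{1}$, is immediate: the first inequality of~\eqref{cond-wo} already fails for every $d\ge 0$, so a non-trivial equilibrium exists on the whole half-line. In case (i), $\mu(s_{\rm in})<Q/V$ forces $\mu(s_{\rm in})<Q/V_{1}$ as well, so existence reduces to $P(X)<0$; here the slope $Q-VX$ is strictly positive while the constant $V_{2}X(V_{1}X-Q)$ is strictly negative, hence $d\mapsto P(X)$ is strictly increasing and vanishes at the unique positive root $\bar d$ obtained by a one-line solve of the affine equation, yielding the formula stated in the proposition and the interval $(0,\bar d)$.

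Case (ii) is the one requiring slightly more care, since both the constant term and the slope of $P(X)$ in $d$ are now non-positive and can individually vanish at the endpoints $X=Q/V_{1}$ and $X=Q/V$. The key observation is that they cannot vanish \emph{simultaneously}: $V_{1}X=Q=VX$ would force $V_{1}=V$, i.e.\ $V_{2}=0$, contrary to the standing assumption. At the two boundary values of $X$ the positivity of $d$ is precisely what converts the remaining non-strict inequality into the strict one $P(X)<0$, and elsewhere in the interval both terms are already strictly negative. Hence the non-trivial equilibrium exists for every $d>0$. The only subtlety in the whole argument is this corner check; the rest is a direct case-split on the signs of the constant term and the slope of the affine function $d\mapsto P(\mu(s_{\rm in}))$.
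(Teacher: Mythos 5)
Your proposal is correct and follows essentially the same route as the paper: the same rewriting of $P(\mu(s_{\rm in}))$ as an affine function of $d$ with constant term $V_{2}\mu(s_{\rm in})\bigl(V_{1}\mu(s_{\rm in})-Q\bigr)$ and slope $Q-V\mu(s_{\rm in})$, followed by the same sign case-split, and you are in fact more careful at the endpoints of case (ii) than the paper, which loosely writes both quantities as strictly negative. The only quibble is the boundary value $d=0$ in case (iii): Proposition~\ref{prop-eq} is established for $d>0$ (its proof divides by $d$), so for $d=0$ one should instead invoke, as the paper does, the classical equilibrium analysis of the single chemostat of volume $V_{1}$, which exists precisely when $\mu(s_{\rm in})>Q/V_{1}$.
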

\begin{proof}
When $d=0$ (that is, when the volume $V_{2}$ is detached) the classical equilibria analysis of the single chemostat model with volume $V_{1}$ (see, e.g., \cite{SW95}) assures that the positive equilibrium $s_{1}^{\star}$ exists when $\mu(s_{\rm in})>Q/V_{1}$, which corresponds to the case (iii) on the proposition statement. \\
When $d>0$, we prove cases (i)-(iii) by taking into account that they correspond to three different scenarios where condition \eqref{cond-wo} is not fulfilled. For ease of reasoning, we rewrite $P(\mu(s_{\rm in}))$ as
\begin{equation}\label{eq:Psin}
P(\mu(s_{\rm in}))=V_{2}\mu(s_{\rm in}) \underbrace{\big( V_{1} \mu(s_{\rm in}) - Q \big)}_{A} + d \underbrace{\big( Q - (V_{1}+V_{2})\mu(s_{\rm in}) \big)}_{B}.
\end{equation}
\textit{(i)}. In this case, the non-trivial equilibrium exists when $P(\mu(s_{\rm in}))<0$. It is straightforward to see that $A<0$, $B>0$ and so $s_{1}^{\star}(d)<s_{\rm in}$ exists when $0<d<\bar{d}=-V_{2}\mu(s_{\rm in})A/B$. 
\\
\textit{(ii)}. In this case, the non-trivial equilibrium exists when $P(\mu(s_{\rm in}))<0$. It is straightforward to see that $A<0$, $B<0$ and so $s_{1}^{\star}(d)<s_{\rm in}$ exists for all $d> 0$.
\\
\textit{(iii)}. In this case, the non-trivial equilibrium exists for all values of $P(\mu(s_{\rm in}))$, and so $s_{1}^{\star}(d)<s_{\rm in}$ exists for all $d\geq 0$. 
\end{proof}
We now study the two extreme situations: no diffusion and infinite diffusion.  
\begin{lemma}\label{lem:d0_inf}It follows that
\begin{enumerate}[(i)]
\item When $\mu(s_{\rm in})> Q/V_{1}$, the non trivial equilibrium of system \eqref{sys} fulfills 
$$ s_{1}^{\star}(0)= s_{1}^{\star,0},$$
where $s_{1}^{\star,0}= \mu^{-1}\left(\frac{Q}{V_{1}}\right)$ is the non-trivial steady state of the single chemostat model with volume $V_{1}$. \\
In other case $\lim_{d\rightarrow 0^{+}} s_{1}^{\star}(d)= s_{\rm in}$.
\item When $\mu(s_{\rm in})\geq Q/V$, the non trivial equilibrium of system \eqref{sys} fulfills 
$$\lim_{d\rightarrow +\infty} s_{1}^{\star}(d)= s_{1}^{\star,\infty},$$
where $s_{1}^{\star,\infty}= \mu^{-1}\left(\frac{Q}{V}\right),$ is the non-trivial steady state of the single chemostat model with volume $V= V_{1}+V_{2}$.
\end{enumerate}

\end{lemma}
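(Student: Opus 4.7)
The plan is to extract both limits from an equivalent form of the equilibrium equations \eqref{eq1}--\eqref{eq2}. Multiplying these equations by $V_{1}$ and $V_{2}$ respectively, the non-trivial equilibrium $(s_{1}^{\star}(d),s_{2}^{\star}(d))$ satisfies the chain of equalities
\begin{equation*}
(Q-V_{1}\mu(s_{1}^{\star}))(s_{\rm in}-s_{1}^{\star}) \;=\; d(s_{1}^{\star}-s_{2}^{\star}) \;=\; V_{2}\,\mu(s_{2}^{\star})(s_{\rm in}-s_{2}^{\star}).
\end{equation*}
The proof of Proposition \ref{prop-eq} gives $s_{\rm in}>s_{1}^{\star}>s_{2}^{\star}>0$, so the three quantities are strictly positive, which immediately yields the uniform upper bound $\mu(s_{1}^{\star}(d))<Q/V_{1}$.

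For part (i) I would argue by subsequence extraction. Fix $d_{n}\to 0^{+}$ and, by compactness, assume $s_{1}^{\star}(d_{n})\to\alpha\in[0,s_{\rm in}]$. Since $d_{n}(s_{1}^{\star}-s_{2}^{\star})\leq d_{n}s_{\rm in}\to 0$, the outer equality $(Q-V_{1}\mu(s_{1}^{\star}))(s_{\rm in}-s_{1}^{\star})=d(s_{1}^{\star}-s_{2}^{\star})$ gives $(Q-V_{1}\mu(\alpha))(s_{\rm in}-\alpha)=0$ in the limit. When $\mu(s_{\rm in})>Q/V_{1}$, the uniform bound rewrites as $s_{1}^{\star}(d)<\mu^{-1}(Q/V_{1})<s_{\rm in}$, so $\alpha<s_{\rm in}$ and hence $\mu(\alpha)=Q/V_{1}$, i.e.\ $\alpha=s_{1}^{\star,0}$. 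In the complementary case $\mu(s_{\rm in})\leq Q/V_{1}$, any $\alpha<s_{\rm in}$ would give $\mu(\alpha)<\mu(s_{\rm in})\leq Q/V_{1}$ strictly by Hypothesis \ref{H1}, contradicting the vanishing of the product; hence $\alpha=s_{\rm in}$.

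For part (ii), eliminating $d$ between the two outer equalities produces the $d$-free identity $(Q-V_{1}\mu(s_{1}^{\star}))(s_{\rm in}-s_{1}^{\star})=V_{2}\mu(s_{2}^{\star})(s_{\rm in}-s_{2}^{\star})$, while the rightmost equality rewrites, with $\epsilon_{i}:=s_{\rm in}-s_{i}^{\star}$, as $\epsilon_{1}/\epsilon_{2}=1-V_{2}\mu(s_{2}^{\star})/d$, which tends to $1$ as $d\to+\infty$ because $\mu$ is bounded on $[0,s_{\rm in}]$; in particular $s_{1}^{\star}-s_{2}^{\star}\to 0$. Extracting a subsequence along which both components share a common limit $\alpha$ and passing to the limit in the $d$-free identity yields $(s_{\rm in}-\alpha)(Q-V\mu(\alpha))=0$, so $\alpha\in\{s_{\rm in},\mu^{-1}(Q/V)\}$.

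The main obstacle is to rule out $\alpha=s_{\rm in}$ in the strict case $\mu(s_{\rm in})>Q/V$ (when $\mu(s_{\rm in})=Q/V$ the two candidates already coincide, giving the claim for free). My plan is to compare two available expressions for $\epsilon_{1}/\epsilon_{2}$: the one above, tending to $1$, and the one read off the $d$-free identity, namely $V_{2}\mu(s_{2}^{\star})/(Q-V_{1}\mu(s_{1}^{\star}))$. Under the hypothesis $\alpha=s_{\rm in}$, this second ratio converges either to $V_{2}\mu(s_{\rm in})/(Q-V_{1}\mu(s_{\rm in}))$ when $\mu(s_{\rm in})<Q/V_{1}$, or to $+\infty$ when $\mu(s_{\rm in})\geq Q/V_{1}$ (using $Q-V_{1}\mu(s_{1}^{\star})>0$ throughout); matching it with the limit $1$ forces $V\mu(s_{\rm in})=Q$, which contradicts $\mu(s_{\rm in})>Q/V$. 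Hence $\alpha=\mu^{-1}(Q/V)=s_{1}^{\star,\infty}$.
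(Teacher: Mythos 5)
Your proof is correct, and on the key point it takes a different (and more self-contained) route than the paper. For part (ii) the skeleton is the same: both you and the paper use $d(s_{1}^{\star}-s_{2}^{\star})=V_{2}\mu(s_{2}^{\star})(s_{\rm in}-s_{2}^{\star})$ to get $s_{1}^{\star}-s_{2}^{\star}\to 0$, then the $d$-free identity $(Q-V_{1}\mu(s_{1}^{\star}))(s_{\rm in}-s_{1}^{\star})=V_{2}\mu(s_{2}^{\star})(s_{\rm in}-s_{2}^{\star})$ to reduce the possible limits to $\{s_{\rm in},\,\mu^{-1}(Q/V)\}$. Where you diverge is in excluding the limit $s_{\rm in}$: the paper simply invokes Proposition \ref{prop:s1dinit} (which only gives $s_{1}^{\star}(d)<s_{\rm in}$ for each fixed $d$, so the exclusion is stated rather tersely), whereas you compare the two expressions for $(s_{\rm in}-s_{1}^{\star})/(s_{\rm in}-s_{2}^{\star})$ — one tending to $1$, the other to $V_{2}\mu(s_{\rm in})/(Q-V_{1}\mu(s_{\rm in}))$ or blowing up — and derive the contradiction $V\mu(s_{\rm in})=Q$ against $\mu(s_{\rm in})>Q/V$; this is a genuine, quantitative argument that closes the gap the paper leaves implicit, and your subsequence-compactness framing makes the passage to the limit cleaner than the paper's "let $d\to+\infty$" phrasing. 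Similarly, for part (i) the paper only cites the classical single-chemostat analysis (which covers the literal $d=0$ claim) and does not actually prove the stated limit $\lim_{d\to 0^{+}}s_{1}^{\star}(d)=s_{\rm in}$ in the other case, while your vanishing-product argument handles both regimes; note only that the first assertion of (i) is literally about $d=0$ (the decoupled tank), which is worth one explicit sentence, and that your claim of uniform positivity $Q-V_{1}\mu(s_{1}^{\star})>0$ correctly reproduces the paper's bound $s_{1}^{\star}<\lambda_{1}(s_{\rm in})$ from the proof of Proposition \ref{prop-eq}. The only cosmetic looseness is in the subcase $\mu(s_{\rm in})>Q/V_{1}$ of your exclusion step, where the hypothesis $s_{1}^{\star}\to s_{\rm in}$ is already incompatible with that bound, so the "ratio tends to $+\infty$" wording applies strictly only when $\mu(s_{\rm in})=Q/V_{1}$; either way the contradiction stands.
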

\begin{proof} \mbox{}
\\
\textit{(i)}. This result is a direct consequence of the classical equilibria analysis of the single chemostat model with volume $V_{1}$ (see, e.g., \cite{SW95}), which assures that $s_{1}^{\star,0}$ exists when $\mu(s_{\rm in})>Q/V_{1}$.\\
\\
\textit{(ii)}. For any $d>0$, Proposition \ref{prop-eq} guarantees the existence of a unique non trivial equilibrium $s^{\star}=(s_{1}^{\star},s_{2}^{\star})\in (0,s_{\rm in})\times(0,s_{\rm in})$ that is solution of
\begin{equation}\label{eq12}
\left\{
\begin{array}{l}
d\big(s_{2}^{\star}-s_{1}^{\star}\big)= \big( V_{1}\mu(s_{1}^{\star})-Q \big)\big(s_{\rm in}-s_{1}^{\star}\big),\\
\\
d\big(s_{1}^{\star}-s_{2}^{\star}\big)= V_{2}\mu(s_{2}^{\star})\big(s_{\rm in}-s_{2}^{\star}\big).
\end{array}
\right.
\end{equation}
When $d$ is arbitrary large, one obtains
$$\lim_{d\rightarrow +\infty} s_{1}^{\star}-s_{2}^{\star}=0.$$
From equations \eqref{eq12}, one can also deduce the following equality (valid for any $d$)
\begin{equation}\label{eq:dinfty}
 (V_{1}\mu(s_{1}^{\star})-Q)(s_{\rm in}-s_{1}^{\star})=-V_{2}\mu(s_{2}^{\star})(s_{\rm in}-s_{2}^{\star}).
\end{equation}
Consequently, one has
$$\lim_{d\rightarrow +\infty} s_{1}^{\star}(d)= \lim_{d\rightarrow +\infty} s_{2}^{\star}(d)=s_{\rm in} \mbox{ or }\lim_{d\rightarrow +\infty} s_{1}^{\star}(d)= \lim_{d\rightarrow +\infty} s_{2}^{\star}(d)= s_{1}^{\star,\infty} \hspace{5mm} \mbox{as }V=V_{1}+V_{2},$$
where the classical equilibria analysis of the single chemostat model with volume $V$ assures that $s_{1}^{\star,\infty}$ exists when $\mu(s_{\rm in})>Q/V$. But Proposition \ref{prop:s1dinit} shows that, under the assumptions of the lemma, $s_{1}^{\star}(d)$ cannot converge to $s_{\rm in}$.
\end{proof}
We now present our main result concerning properties of the map $d
\rightarrow s_{1}^{\star}(d)$, defined at the non-trivial steady state. 
\begin{proposition}\label{prop:s1d}
Let $\hat{s}$ be defined in \eqref{eq:shat} and $V=V_{1}+V_{2}$. It follows that:  
\begin{enumerate}[(i)]
\item If $\mu(s_{\rm in}) < Q/V$, then the map $d \rightarrow s_{1}^{\star}(d)$ admits a minimum in $d^{\star}<\bar{d}$ that is strictly less than $s_{\rm in}$.
\item If $\mu(s_{\rm in}) \geq Q/V$ and $s_{1}^{\star,\infty}< \hat{s}$, then the map $d \rightarrow s_{1}^{\star}(d)$ admits a minimum in $d^{\star}<+\infty$, that is strictly less than $s_{1}^{\star,\infty}$.
\item If $\mu(s_{\rm in}) \geq Q/V$ and $s_{1}^{\star,\infty} \geq \hat{s}$, then the map $d \rightarrow s_{1}^{\star}(d)$ is decreasing and $s_{1}^{\star}(d)> s_{1}^{\star,\infty}$ for any $d> 0$.
\end{enumerate}
\end{proposition}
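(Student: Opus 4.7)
The plan is to parametrize the family of positive equilibria by $s_2^\star$ rather than by $d$. Summing the two steady-state equations in~\eqref{eq12} eliminates $d$ and yields the $d$-independent algebraic identity
\begin{equation*}
\psi(s_1^\star, s_2^\star) := Q(s_{\rm in} - s_1^\star) - V_1\beta(s_1^\star) - V_2\beta(s_2^\star) = 0 ,
\end{equation*}
while $d$ is recovered through $d(s_1^\star - s_2^\star) = V_2\beta(s_2^\star)$. The positive equilibria thus trace a curve $\mathcal{C}$ in the $(s_1, s_2)$-plane that is itself independent of $d$. I would first describe $\mathcal{C}$ as a graph $s_1 = s_1(s_2)$ over a maximal interval $[0, s_2^{\max})$ by selecting the appropriate level set of the strictly concave auxiliary function $g(s_1) := V_1\beta(s_1) + Q s_1$; using Lemma~\ref{lem:d0_inf} and Proposition~\ref{prop:s1dinit}, one checks that $s_2^{\max} = s_{\rm in}$ in case~(i) and $s_2^{\max} = s_1^{\star,\infty}$ in cases~(ii)--(iii).

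The heart of the argument is an implicit differentiation of $\psi = 0$, which gives
\begin{equation*}
\frac{ds_1}{ds_2} = -\frac{V_2\beta'(s_2)}{Q + V_1\beta'(s_1)} .
\end{equation*}
On the relevant branch one has $Q + V_1\beta'(s_1) = g'(s_1) > 0$, so the sign of $ds_1/ds_2$ is exactly the opposite of the sign of $\beta'(s_2)$. By Lemma~\ref{lemma1}, $\beta'$ vanishes at the unique interior point $\hat s$ and has constant sign on each side of it, so $s_1(s_2)$ has exactly one critical point, located at $s_2 = \hat s$, and this critical point is a minimum whenever $\hat s$ lies in the interior of $[0, s_2^{\max})$. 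To transfer the conclusion back to the $d$-parametrization, I would argue that $s_2 \mapsto d(s_2) = V_2\beta(s_2)/(s_1(s_2) - s_2)$ is a continuous bijection from $[0, s_2^{\max})$ onto the admissible range of $d$: uniqueness of the positive equilibrium at each admissible $d$ (Proposition~\ref{prop-eq}) combined with continuity forces strict monotonicity, and the endpoint values $d(0) = 0$ and $d(s_2^{\max}) \in \{\bar d, +\infty\}$ pin down the monotonicity as increasing.

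The three alternative conclusions then fall out. In case~(i), $\hat s \in (0, s_{\rm in})$ is interior and $s_1(0) = s_1(s_{\rm in}) = s_{\rm in}$, so the critical point produces a minimum $s_1(\hat s) < s_{\rm in}$ attained at $d^\star = V_2\beta(\hat s)/(s_1(\hat s) - \hat s) \in (0, \bar d)$. In cases~(ii) and~(iii), with $s_2^{\max} = s_1^{\star,\infty}$, the behaviour is governed by whether or not $\hat s$ belongs to $[0, s_1^{\star,\infty})$: if it does, the minimum-at-$\hat s$ argument produces a finite $d^\star$ with $s_1^\star(d^\star) < s_1^{\star,\infty}$; if it does not, $\beta'(s_2)$ has constant sign on the whole valid range, $s_1^\star(d)$ is strictly monotone, and the limit value $s_1^{\star,\infty}$ from Lemma~\ref{lem:d0_inf}(ii) is only approached as $d \to +\infty$.

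The main technical obstacle I anticipate is the sub-case $\mu(s_{\rm in}) > Q/V_1$, in which $g$ attains an interior maximum at some $s_g \in (0, s_{\rm in})$ and fails to be injective on $[0, s_{\rm in}]$; one must check that the valid branch always lives in the increasing part $[0, s_g)$ of $g$, so that $g'(s_1) > 0$ can still be invoked in the implicit-differentiation step. The remaining verifications (signs and boundary limits of $d(s_2)$ at $s_2 = 0$ and $s_2 = s_2^{\max}$, matched with the values in Lemma~\ref{lem:d0_inf} and Proposition~\ref{prop:s1dinit}) are routine but call for careful bookkeeping in each sub-case.
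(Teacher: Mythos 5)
Your proposal is correct in substance and lands on exactly the same mechanism as the paper, but it gets there by a genuinely different route. The paper differentiates the equilibrium system \eqref{eq12} directly with respect to $d$, solves the resulting linear system with matrix $\Gamma$, and uses $\det(\Gamma)=d^{2}(1-\phi_{1}'(s_{1}^{\star})\phi_{2}'(s_{2}^{\star}))<0$ (imported from the stability analysis in Proposition~\ref{prop-stab}) to conclude that $s_{2}^{\star}(\cdot)$ is increasing in $d$ and that the sign of $\partial_{d}s_{1}^{\star}$ is $-\mathrm{sgn}\,\beta'(s_{2}^{\star})$. You instead eliminate $d$ by summing the two equations, parametrize the equilibrium branch by $s_{2}$, and obtain $ds_{1}/ds_{2}=-V_{2}\beta'(s_{2})/(Q+V_{1}\beta'(s_{1}))$, which is precisely the ratio of the paper's two partial derivatives; the monotone change of variables $s_{2}\mapsto d(s_{2})$ is then obtained from uniqueness of the positive equilibrium (Proposition~\ref{prop-eq}) plus continuity, rather than from the determinant computation. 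What your route buys is that you never need $\det(\Gamma)<0$; what it costs is the extra bookkeeping on the branch of the level set and on $d(s_{2})$ (including checking $s_{1}(s_{2})>s_{2}$ away from the endpoint, which follows since the curve $\psi=0$ can only meet the diagonal at $\mu(s)=Q/V$). The ``obstacle'' you flag, namely $Q+V_{1}\beta'(s_{1}^{\star})>0$ on the valid branch, is exactly the paper's step $A>0$ (equivalently $\phi_{1}'(s_{1}^{\star})>1$), and it closes in one line: any positive equilibrium satisfies $s_{1}^{\star}<\lambda_{1}(s_{\rm in})$, i.e.\ $\mu(s_{1}^{\star})<Q/V_{1}$ (proof of Proposition~\ref{prop-eq}), so $Q+V_{1}\beta'(s_{1}^{\star})=(Q-V_{1}\mu(s_{1}^{\star}))+V_{1}\mu'(s_{1}^{\star})(s_{\rm in}-s_{1}^{\star})>0$; you should make this explicit rather than leave it as an anticipated check.

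One point you should state openly: your case split (minimum at a finite $d^{\star}$ exactly when $\hat{s}<s_{1}^{\star,\infty}$, monotone decrease when $\hat{s}\geq s_{1}^{\star,\infty}$) is the mathematically correct one and coincides with what the paper's own proof uses ($\hat{s}\in(0,s_{1}^{\star,\infty})$ in its case (ii), $\hat{s}>s_{1}^{\star,\infty}$ in its case (iii)), but it is the reverse of the inequalities as printed in items (ii)--(iii) of the proposition statement. So you are in effect proving a corrected statement, not the literal one; since the printed hypotheses appear to be swapped (a simple numerical check with Monod kinetics confirms that for $s_{1}^{\star,\infty}<\hat{s}$ the map is decreasing with $s_{1}^{\star}(d)>s_{1}^{\star,\infty}$, not admitting an interior minimum), you should say so explicitly rather than silently interchange the conditions.
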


\begin{proof}
If one differentiates system \eqref{eq12} with respect to $d$, it follows that 
$$ \big(s_{2}^{\star}-s_{1}^{\star}\big)+ d\big(\partial_{d} s_{2}^{\star}- \partial_{d}s_{1}^{\star}\big)= \partial_{d}s_{1}^{\star} \underbrace{\big( Q + V_{1}\mu'(s_{1}^{\star})(s_{\rm in}-s_{1}^{\star}) - V_{1}\mu(s_{1}^{\star}) \big)}_{A},$$
$$ \big(s_{1}^{\star}-s_{2}^{\star}\big)+ d\big(\partial_{d} s_{1}^{\star}- \partial_{d}s_{2}^{\star}\big)= \partial_{d}s_{2}^{\star} \underbrace{\big( V_{2}\mu'(s_{2}^{\star})(s_{\rm in}-s_{2}^{\star}) - V_{2}\mu(s_{2}^{\star}) \big)}_{B},$$
which can be rewritten as
$$ \underbrace{\left [ \begin{array}{c c} A+d &-d \\ d & -B-d\end{array} \right]}_{\Gamma}\left ( \begin{array}{c} \partial_{d}s_{1}^{\star} \\ \partial_{d}s_{2}^{\star} \end{array}\right) = (s_{2}^{\star}-s_{1}^{\star}) \left ( \begin{array}{c} 1 \\ 1 \end{array}\right).$$
Remark that
$$ \begin{array}{r c l} A+d & = & d\phi_{1}'(s_{1}^{\star}), \\ B + d & = & d\phi_{2}'(s_{2}^{\star}), \\ {\rm det}(\Gamma)&=& d^{2}\big(1-\phi_{1}'(s_{1}^{\star})\phi_{2}'(s_{2}^{\star})\big). \end{array}$$
As seen in the proof of Proposition \ref{prop-stab}, one has that ${\rm det}(\Gamma)<0$ and so the derivatives $\partial_{d}s_{1}^{\star}$ and $\partial_{d}s_{2}^{\star}$ can be defined as
\begin{equation}
\begin{array}{r c l}
\partial_{d}s_{1}^{\star} & = & (s_{2}^{\star}-s_{1}^{\star}) \frac{-B}{{\rm det}(\Gamma)}, \\ \\ \partial_{d}s_{2}^{\star} & = & (s_{2}^{\star}-s_{1}^{\star}) \frac{A}{{\rm det}(\Gamma)}.
\end{array}
\end{equation}
Firstly, we prove that $A>0$ by showing that $\phi_{1}'(s_{1}^{\star}(d))>1$. \\
From Proposition \ref{prop-eq}, one has that the positive steady-state fulfills 
$$0< s_{1}^{\star}(d)< \lambda_{1}(s_{\rm in})= \min (s_{\rm in},s_{1}^{\star,0}).$$
Since $\phi_{1}$ is concave (equivalently, $\phi_{1}'$ is decreasing)
on $[0, \lambda_{1}(s_{\rm in})]$, one has that
$\phi_{1}'(s_{1}^{\star}(d)) > \phi_{1}'(\lambda_{1}(s_{\rm in}))$. Thus,
we prove that $A>0$ by showing $\phi_{1}'(\lambda_{1}(s_{\rm in}))>1$:
\begin{itemize} 
\item If $\mu(s_{\rm in})\leq Q/V_{1}$, then $\lambda_{1}(s_{\rm in})= s_{\rm in}$ and $\phi_{1}'(s_{\rm in})= 1 + \frac{Q-V_{1}\mu(s_{\rm in})}{d}>1$.
\item If $\mu(s_{\rm in})> Q/V_{1}$, then $\lambda_{1}(s_{\rm in})=
  s_{1}^{\star,0}$ and $\phi_{1}'(s_{1}^{\star,0})= 1 +
  \frac{V_{1}}{d}\mu'(s_{1}^{\star,0})(s_{\rm in}-s_{1}^{\star,0})>1$. 
\end{itemize}
Therefore one has $\partial_{d} s_{2}^\star >0$ i.e. $s_{2}^{\star}(\cdot)$ is an increasing map.\\
\\
Now, notice that $B= V_{2}\beta'(s_{2}^{\star}(d))$ and its sign depends on the relative position of $s_{2}^{\star}(d)$ with respect to parameter $\hat{s}$. The cases considered on the proposition statement are treated separately. 
\\
\\
\textit{(i)} Since $s_{2}^{\star}(\cdot)$ is increasing, $\lim_{d\rightarrow 0} s_{2}^{\star}(d)= 0$, $\lim_{d\rightarrow \bar{d}} s_{2}^{\star}(d)= s_{\rm in}$ and $\hat{s}\in (0,s_{\rm in})$, by using the Mean Value Theorem it follows that there exists a unique value $d\in (0, \bar{d})$ (denoted by $d^{\star}$) such that $s_{2}^{\star}(d^{\star})=\hat{s}$, with $\beta'(s_2^\star(d))>0$ for $d<d^\star$ and $<0$ for $d>d^\star$. Consequently, $\partial_{d}s_{1}^{\star}$ admits a unique minimum in $d^{\star}$, as $sgn(\partial_d s_1^{\star}(d))=-sgn(B)$.\\
\\
\textit{(ii)} Since $s_{2}^{\star}(\cdot)$ is increasing, $\lim_{d\rightarrow 0} s_{2}^{\star}(d)= 0$, $\lim_{d\rightarrow +\infty} s_{2}^{\star}(d)= s_{1}^{\star,\infty}$ and $\hat{s}\in (0,s_{1}^{\star,\infty})$, by using the Mean Value Theorem it follows that there exists a unique value $d>0$ (denoted by $d^{\star}$) such that $s_{2}^{\star}(d^{\star})=\hat{s}$. Consequently, $\partial_{d}s_{1}^{\star}$ admits a unique minimum in $d^{\star}$, with $s_1^\star(\cdot)$ decreasing on $[0,d^\star)$ and increasing on $(d^\star,+\infty)$. As $s_1^\star(\cdot)$ is increasing on $(d^\star,+\infty)$ and $\lim_{d\to+\infty} s_1^\star(d)=s_{1}^{\star,\infty}$ (from Lemma \ref{lem:d0_inf}), one necessarily has $s_1^\star(d^\star)< s_{1}^{\star,\infty}$. 
\\
\\
\textit{(iii)} Since $s_{2}^{\star}(\cdot)$ is increasing, $\lim_{d\rightarrow +\infty} s_{2}^{\star}(d)= s_{1}^{\star,\infty}$ and $\hat{s}>s_{1}^{\star,\infty}$, one has that $\beta'(s_{2}^{\star}(d))>0$, i.e., $s_{1}^{\star}(d)$ is decreasing for any $d>0$. As $\lim_{d\rightarrow +\infty}s_{1}^{\star}(d)= s_{1}^{\star,\infty}$, it follows that $s_{1}^{\star}(d) > s_{1}^{\star,\infty}$. 
\end{proof}

A schematic representation of the three situations depicted in Proposition \ref{prop:s1d} can be observed in Figures \ref{fig:s1d}-(a), \ref{fig:s1d}-(b) and \ref{fig:s1d}-(c), respectively.
\begin{figure}[ht!]
\centering
\subfigure[$\mu(s_{\rm in}) < \frac{Q}{V}$]{\includegraphics[width=0.3\textwidth]{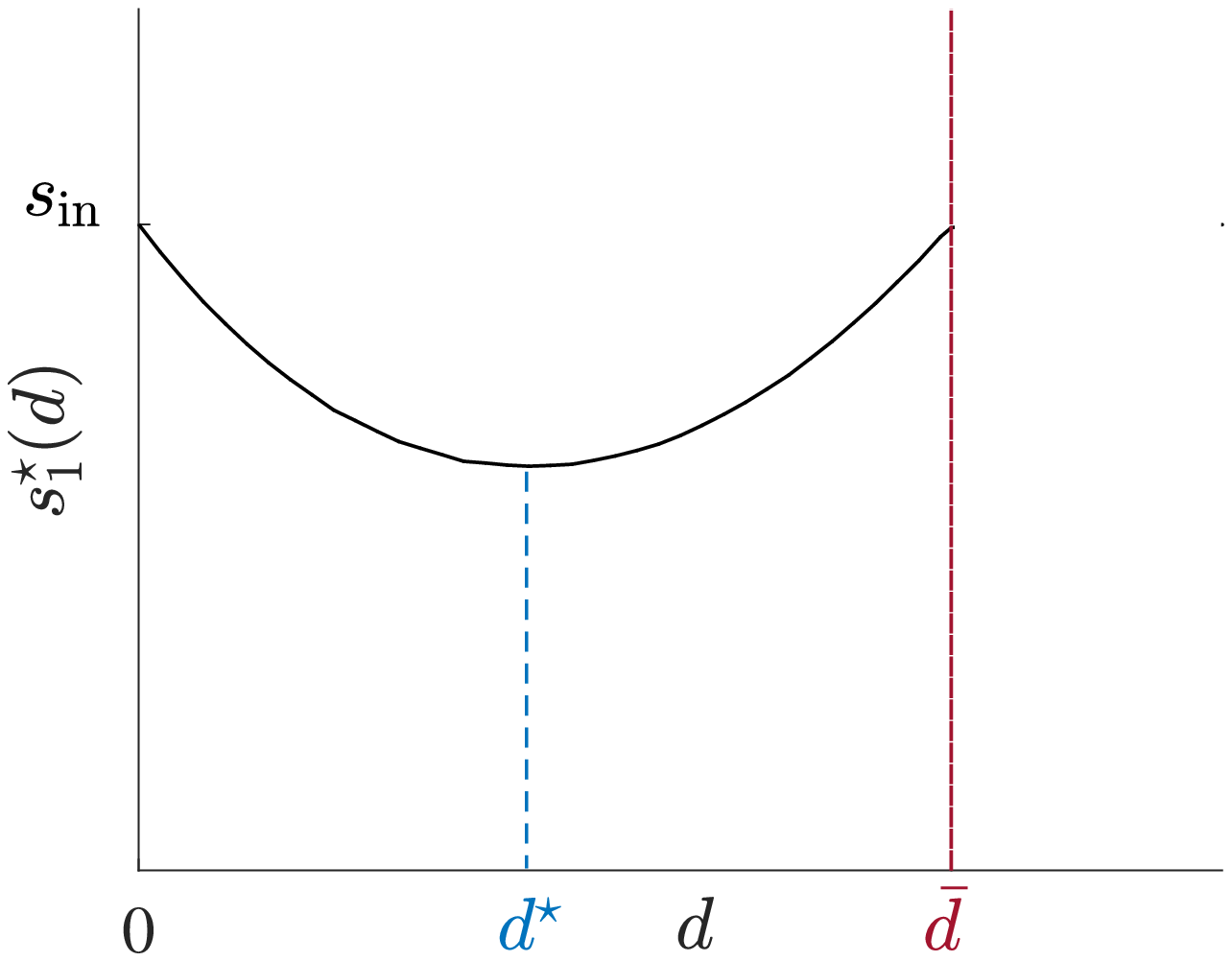}}
\subfigure[$\mu(s_{\rm in}) \geq Q/V$ and $s_{1}^{\star,\infty} < \hat{s}$]{\includegraphics[width=0.3\textwidth]{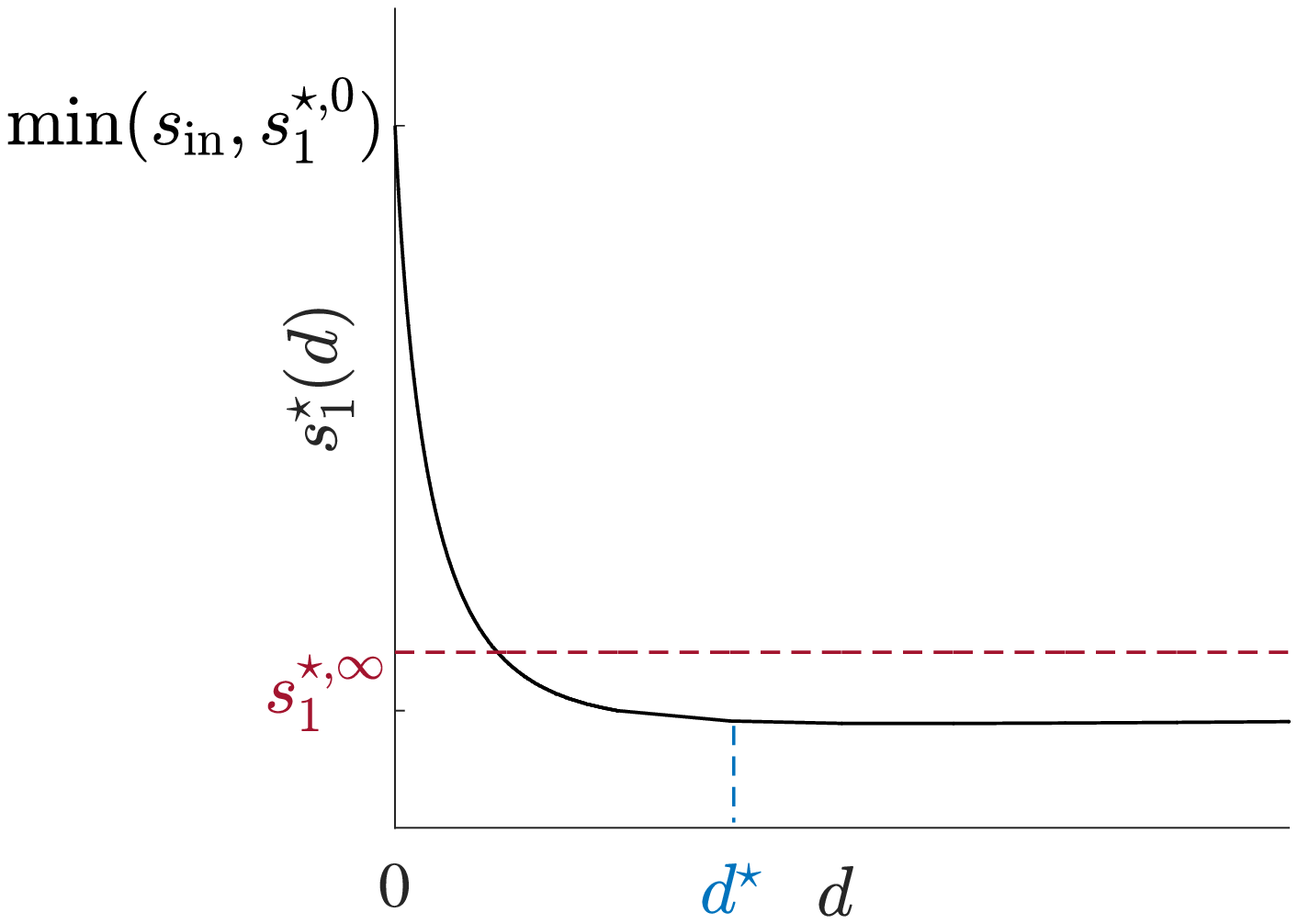}}
\subfigure[$\mu(s_{\rm in}) \geq Q/V$ and $s_{1}^{\star,\infty} \geq \hat{s}$]{\includegraphics[width=0.3\textwidth]{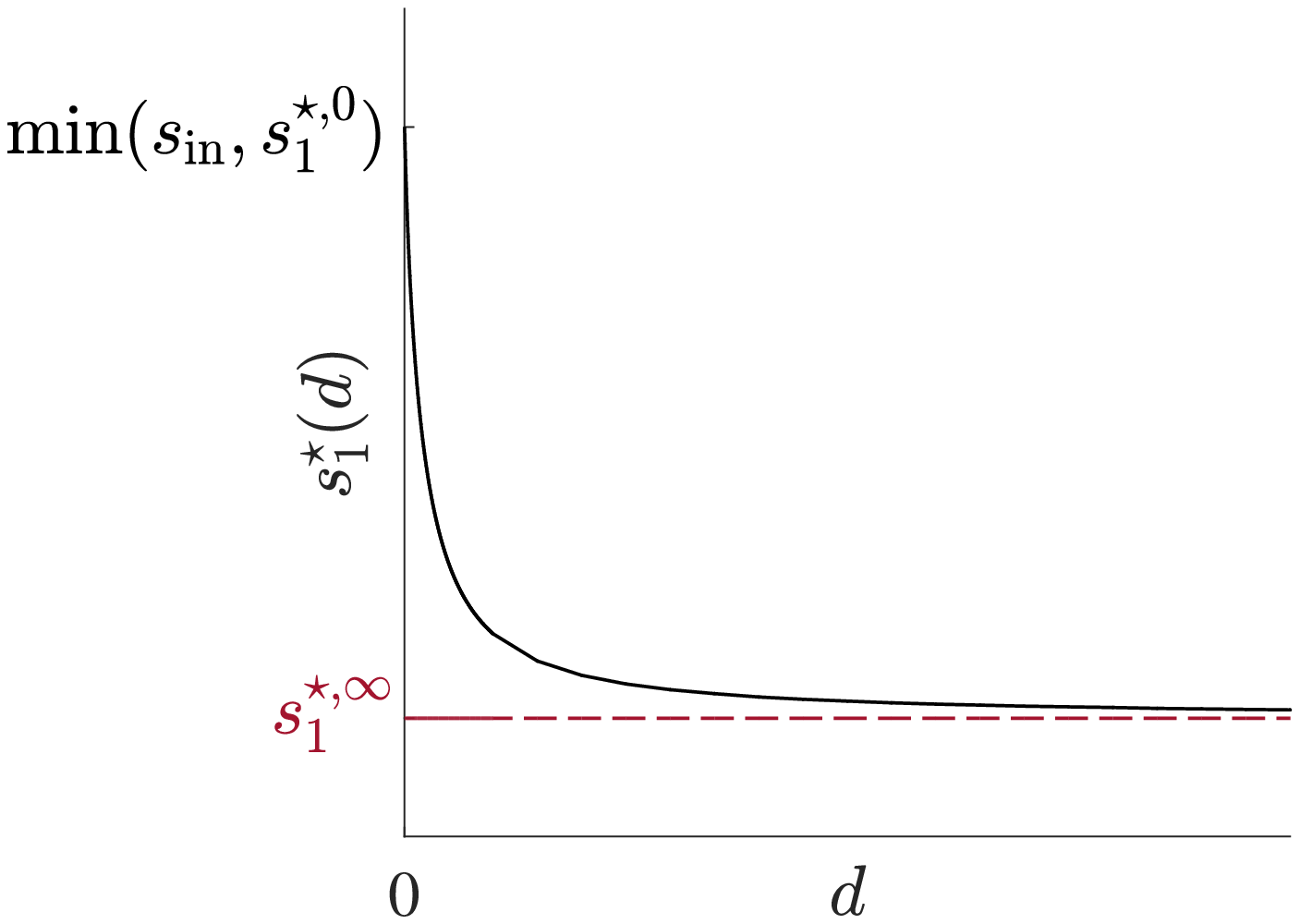}}
\caption{Graphical plot of the function $d \rightarrow s_{1}^{\star}(d)$.}
\label{fig:s1d}
\end{figure}

\section{Optimal configurations}\label{sec:design}
In this section, we optimize the main design parameters of the structured chemostat depicted on Figure \ref{figdeadzone} (reactor
volumes and diffusion rate) for minimizing the total volume, the output concentration being prescribed at steady state. 
One can easily check that minimizing the total volume is equivalent to maximizing the \textit{mean residence time} of the system, i.e., the mean time that a molecule spends in the chemostat (which affects its probability of reacting). A more detailed definition of the mean residence time, its measurement and interpretation can be found in Chapter 15 of \cite{nauman2002}.\\
This section is organized as follows: in Section \ref{sec:POptDgiven}, we solve the problem when the diffusion parameter is fixed. Then, in Section \ref{sec:POptDfree} we solve the full optimization problem in which the diffusion parameter is also considered as an optimization variable. Interpretations of the optimal results are presented in Section \ref{sec:Interpretation}.

\subsection{Parameter $d$ is fixed}\label{sec:POptDgiven}
Given a nominal desired value $s_{\rm ref} < s_{\rm in}$ as output of the process, we look for solutions of the optimization problem
\begin{equation}\label{eq:OptDgiven}
\min_{(V_1,V_{2})\in \Rset^{2}_{+}} \{ V_{1}+V_{2}\mbox{  : such that   } s_{1}= s_{\rm ref} \mbox{   at steady state }\},
\end{equation}
that we denote by $(V_{1}^{\rm opt},V_{2}^{\rm opt})$. \\
For the analysis of the solution of problem \eqref{eq:OptDgiven}, it
is convenient to introduce the functions
\begin{equation}
\label{defgG}
g(s)= \frac{1}{\beta(s)} \hspace{5mm} \mbox{and} \hspace*{5mm} G(s)=
\left(g(s_{\rm ref})-g(s)\right)(s - s_{\rm ref}),
\end{equation}
defined on $(0,s_{\rm in})$, where $\beta$ is defined in \eqref{defbeta}. Notice that function $g$ admits an unique
minimum at $\hat{s}$, by Lemma \ref{lemma1}, and satisfies $\lim_{s\rightarrow 0} g(s)=  \lim_{s\rightarrow +\infty} g(s) = +\infty$. \\

The solution to optimization problem \eqref{eq:OptDgiven} is given by the following proposition.
\begin{proposition}\label{prop:dgiven}
Define $\alpha= \max \big(0, s_{\rm ref} -\frac{Q}{d}(s_{\rm in}-s_{\rm ref})\big)$. The solution of problem \eqref{eq:OptDgiven} satisfies:
\begin{enumerate}[(i)]
\item If $\hat{s}\leq \alpha$, then $V_{1}^{\rm opt}= 0$ and $V_{2}^{\rm opt}= d g(\alpha)(s_{\rm ref}-\alpha)$.
\item If $\hat{s} \in (\alpha,s_{\rm ref})$, then $V_{1}^{\rm opt}= Q / \mu(s_{\rm ref}) +  d g(s_{\rm ref})(s_{2}^{\rm opt}- s_{\rm ref})$ and  $V_{2}^{\rm opt}= d g(s_{2}^{\rm opt})(s_{\rm ref}-s_{2}^{\rm opt})$, where 
$$s_{2}^{\rm opt}= \left\vert \begin{array}{l r} s_{G} & \mbox{if }\alpha\in [0,s_{G}], \\ \alpha & \mbox{if }\alpha\in (s_{G},\hat{s}), \end{array}\right.$$
$s_{G}$ being the unique minimum of the function $G$ on the interval $[\alpha,s_{\rm ref}]$. Moreover, $G'(s_{2}^{\rm opt})>0$ when $s_{2}^{\rm opt}=\alpha$.
\item If $\hat{s} \geq s_{\rm ref}$, then $V_{1}^{\rm opt}= Q / \mu(s_{\rm ref})$ and $V_{2}^{\rm opt}= 0$.
\end{enumerate}
\end{proposition}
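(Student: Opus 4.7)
The plan is to reduce the two-variable optimization over $(V_1,V_2)$ to a one-variable optimization over $s_2$, by exploiting the steady-state constraints. Fix $s_1 = s_{\rm ref}$ in equations \eqref{eq1}--\eqref{eq2}: the second equation immediately gives
\[
V_2 = \frac{d(s_{\rm ref}-s_2)}{\mu(s_2)(s_{\rm in}-s_2)} = d\,g(s_2)(s_{\rm ref}-s_2),
\]
and the first, solved for $V_1$, gives
\[
V_1 = \frac{Q}{\mu(s_{\rm ref})} - d\,g(s_{\rm ref})(s_{\rm ref}-s_2).
\]
Adding the two expressions yields $V_1+V_2 = Q/\mu(s_{\rm ref}) + d\,G(s_2)$, so minimizing the total volume amounts to minimizing $G(s_2)$. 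The constraint $V_2 \geq 0$ forces $s_2 \leq s_{\rm ref}$, while $V_1 \geq 0$ is equivalent to $s_2 \geq s_{\rm ref} - (Q/d)(s_{\rm in}-s_{\rm ref})$; combined with $s_2 \geq 0$ this gives the admissible interval $s_2 \in [\alpha,s_{\rm ref}]$, with the endpoint $s_2=\alpha$ corresponding to $V_1=0$ when $\alpha>0$.

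Next, I would use the observation (immediate from Lemma \ref{lemma1}) that $g=1/\beta$ is strictly convex on $(0,s_{\rm in})$ with a unique minimum at $\hat s$, and that $g(s)\to+\infty$ as $s\to 0^+$. From
\[
G'(s) = g'(s)(s_{\rm ref}-s) + g(s_{\rm ref}) - g(s),
\]
one reads off the boundary data $G'(s_{\rm ref})=0$, $G'(\hat s)=g(s_{\rm ref})-g(\hat s)\geq 0$ and the limit $G'(0^+)=-\infty$. These suffice to treat case (iii) directly: if $\hat s \geq s_{\rm ref}$, then $g$ is decreasing on $(0,s_{\rm ref}]$, so $g(s)\geq g(s_{\rm ref})$ there and $G(s)\geq 0 = G(s_{\rm ref})$, giving $s_2^{\rm opt}=s_{\rm ref}$ (hence $V_2^{\rm opt}=0$). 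Case (i), $\hat s \leq \alpha$, is also short: on $[\alpha,s_{\rm ref})$ both terms of $G'$ are strictly positive (since $g'(s)\geq 0$ and $g(s)<g(s_{\rm ref})$), so $G$ is strictly increasing and the minimum is at $s_2=\alpha$, yielding $V_1^{\rm opt}=0$ and the stated $V_2^{\rm opt}$.

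The heart of the argument is case (ii), $\hat s\in(\alpha,s_{\rm ref})$, where I would show that $G$ has exactly one interior critical point $s_G$, located in $(0,\hat s)$, and that this $s_G$ is the global minimizer of $G$ on $[0,s_{\rm ref}]$. Since $G'(0^+)=-\infty$ and $G'(\hat s)>0$, continuity gives at least one zero of $G'$ in $(0,\hat s)$; to get uniqueness I would use strict convexity of $g$ to rewrite $G'(s)=0$ as the geometric condition that the chord joining $(s,g(s))$ and $(s_{\rm ref},g(s_{\rm ref}))$ has slope equal to $g'(s)$, which by strict convexity selects a unique $s$. A parallel argument on $(\hat s,s_{\rm ref})$, combined with $G'(\hat s)>0$, $G'(s_{\rm ref})=0$, and $G''(s_{\rm ref})=-2g'(s_{\rm ref})<0$, shows $G'>0$ on all of $(\hat s,s_{\rm ref})$, so $G$ is strictly decreasing on $[0,s_G]$ and strictly increasing on $[s_G,s_{\rm ref}]$. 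It follows that on $[\alpha,s_{\rm ref}]$ the minimizer is $s_G$ when $\alpha\leq s_G$ (interior optimum, $V_1>0$) and $\alpha$ when $s_G<\alpha<\hat s$ (boundary optimum with $V_1=0$, and $G'(\alpha)>0$ by the sign of $G'$ on $(s_G,\hat s)$). Substituting $s_2^{\rm opt}$ into the formulas for $V_1$ and $V_2$ gives the claimed expressions.

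I expect the main obstacle to be the uniqueness of $s_G$, i.e.\ ruling out spurious extra zeros of $G'$ on $(\hat s,s_{\rm ref})$. The natural tool is strict convexity of $g$ (equivalently, strict concavity of $\beta$) recast as a chord/tangent inequality; the computation $G''(\hat s)=g''(\hat s)(s_{\rm ref}-\hat s)>0$ versus $G''(s_{\rm ref})<0$ shows $G''$ does change sign, so the argument cannot rely on $G'$ being monotone and must use a convexity-based uniqueness lemma rather than sign of $G''$ alone.
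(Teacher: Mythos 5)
Your reduction of problem \eqref{eq:OptDgiven} to minimizing $G$ over $s_{2}\in[\alpha,s_{\rm ref}]$, the identification of the admissible interval, and your handling of cases (i) and (iii) coincide with the paper's proof and are correct. The genuine gap is in case (ii), precisely at the step you flag: uniqueness of the interior critical point $s_{G}$. The critical-point equation is not the one you state. From $G'(s)=g'(s)(s_{\rm ref}-s)+g(s_{\rm ref})-g(s)=0$ one gets $g'(s)=\frac{g(s_{\rm ref})-g(s)}{s-s_{\rm ref}}$, i.e.\ $g'(s)$ must equal \emph{minus} the slope of the chord joining $(s,g(s))$ and $(s_{\rm ref},g(s_{\rm ref}))$. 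The condition you invoke (tangent slope equal to the chord slope) has \emph{no} solution with $s<s_{\rm ref}$ when $g$ is strictly convex, since then $g'(s)<\frac{g(s_{\rm ref})-g(s)}{s_{\rm ref}-s}$ always; taken literally, your criterion would contradict the existence of $s_{G}$ that you yourself obtain from the intermediate value theorem, and for the correct equation strict convexity alone does not ``select a unique $s$''. Note also that the obstacle you anticipate, spurious zeros of $G'$ on $(\hat{s},s_{\rm ref})$, is actually the easy part: there both terms of $G'$ are positive, exactly as in your case (i); the delicate interval is $(0,\hat{s})$.

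The repair is essentially the paper's argument. Write $G'(s)=(s_{\rm ref}-s)\bigl(g'(s)-H(s)\bigr)$ with $H(s)=\frac{g(s_{\rm ref})-g(s)}{s-s_{\rm ref}}$, so critical points solve $g'(s)=H(s)$. By strict convexity $g'$ is strictly increasing, while $H$ is strictly decreasing where crossings can occur: one has $H'(s)=-\frac{g'(s)+H(s)}{s-s_{\rm ref}}$, and $g'(s)+H(s)<0$ holds on $(\bar{s}_{\rm ref},\hat{s})$ (where $\bar{s}_{\rm ref}<\hat{s}$ is defined by $g(\bar{s}_{\rm ref})=g(s_{\rm ref})$), which is the only region where $g'=H$ is possible since $g'<0<H$ on $(0,\bar{s}_{\rm ref})$ and $g'>0>H$ on $(\hat{s},s_{\rm ref})$; equivalently, $H'<0$ on all of $(0,s_{\rm ref})$ because $g'(s)<\frac{g(s_{\rm ref})-g(s)}{s_{\rm ref}-s}$ by convexity. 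Opposite monotonicity gives at most one crossing, and your sign change of $G'$ gives exactly one, $s_{G}\in(0,\hat{s})$, with $G'<0$ on $(0,s_{G})$ and $G'>0$ on $(s_{G},s_{\rm ref})$; this also delivers $G'(\alpha)>0$ when $\alpha\in(s_{G},\hat{s})$. With this substitution, the rest of your case analysis and the resulting expressions for $V_{1}^{\rm opt}$ and $V_{2}^{\rm opt}$ are as in the paper.
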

\begin{remark}
From Proposition \ref{prop:dgiven}, one concludes that the particular configuration with $V_{1}=0$ (as the one depicted in Figure \ref{figdilution}) is optimal if $\hat{s}\leq \alpha$ or if $s_{G}<\alpha < \hat{s}<s_{\rm ref}$. 
\end{remark}
\begin{proof}[Proof of Proposition \ref{prop:dgiven}]
We replace the value of $s_{1}$ in system \eqref{eq1}-\eqref{eq2} by $s_{\rm ref}$
\begin{equation}\label{eq:1A1Dopt}
\left \{
\begin{array}{l}
0 = \frac{Q}{V_1}(s_{\rm in}-s_{\rm ref}) +  \frac{d}{V_1}(s_{2} - s_{\rm ref}) - \mu(s_{\rm ref})(s_{\rm in}-s_{\rm ref}) ,\\
\\
0 =  \frac{d}{V_{2}}(s_{\rm ref} - s_{2}) - \mu(s_{2})(s_{\rm in}-s_{2}).\\
\end{array}
\right.
\end{equation}
Considering function $g$, system \eqref{eq:1A1Dopt} can be written as
\begin{equation}\label{eq:v1v2s2}
\left \{
\begin{array}{l}
V_{1} = Qg(s_{\rm ref})(s_{\rm in}-s_{\rm ref}) +  d g(s_{\rm ref})(s_{2} - s_{\rm ref}):= v_{1}(s_{2}),\\
\\
V_{2} =  dg(s_{2})(s_{\rm ref}-s_{2}):=v_{2}(s_{2}).\\
\end{array}
\right.
\end{equation}
Thus, given model parameters $d$, $Q$, $s_{\rm in}$ and $s_{\rm ref}$, the volumes are completely characterized by variable $s_{2}$ and solving the optimization problem \eqref{eq:OptDgiven} is equivalent to look for solutions of the problem
\begin{equation}\label{eq:OptDgiven_s}
\min_{s_{2}\in \mathcal{S}_{2}}  \hspace*{5mm} v_{1}(s_{2}) + v_{2}(s_{2}),
\end{equation}
where $\mathcal{S}_{2}$ is the set of admissible values of
$s_{2}$. That is, the solution of problem \eqref{eq:OptDgiven} is
given by %$(V_{1}^{\rm opt},V_{2}^{\rm opt})=
$\left(v_{1}(s_{2}^{\rm opt}),v_{2}(s_{2}^{\rm opt})\right)$, where $s_{2}^{\rm opt}$ is solution of problem \eqref{eq:OptDgiven_s}.\\
In order to determine the admissible set $\mathcal{S}_{2}$, we take into account that both volumes must be nonnegative and proceed as follows:
\begin{itemize}
\item $ v_{1}(s_{2})\geq 0 \Leftrightarrow  Qg(s_{\rm ref})(s_{\rm in}-s_{\rm ref}) +  d g(s_{\rm ref})(s_{2} - s_{\rm ref}) \geq 0 \Leftrightarrow s_{2} \geq s_{\rm ref} - \frac{Q}{d}(s_{\rm in}-s_{\rm ref}).$
\item $ v_{2}(s_{2})\geq 0 \Leftrightarrow  dg(s_{2})(s_{\rm ref}-s_{2})\geq 0 \Leftrightarrow s_{2}\leq s_{\rm ref}.$
\end{itemize}
Moreover, we impose variable $s_{2}$ to be nonnegative, since it describes a (substrate) concentration. One concludes that $\mathcal{S}_{2}= [\alpha,s_{\rm ref}]$. \\
\\
For analytical purposes, we rewrite problem \eqref{eq:OptDgiven_s} as
\begin{equation}\label{eq:OptDgiven_s2}
\min_{s_{2}\in [\alpha,s_{\rm ref}]} Q\underbrace{g(s_{\rm ref})(s_{\rm in}-s_{\rm ref})}_{A} + d G(s_{2}).
\end{equation}
The term $QA$ corresponds to the optimal volume obtained with a single tank, and with a view to reduce this value, we aim to characterize solutions of problem \eqref{eq:OptDgiven_s2} with function $G$ being negative. \\

The cases considered on the proposition statement are treated separately.
\begin{enumerate}[(i)]
\item $\hat{s}\leq \alpha$: Since function $g(\cdot)$ is increasing on the right of $\hat{s}$, then $g(s_{\rm ref})\geq g(s_{2})$ for all $s_{2} \in [\alpha,s_{\rm ref}]$. Consequently, function $G$ is negative on $[\alpha,s_{\rm ref}]$ and is minimized for $s_{2}^{\rm opt}=\alpha$.
\item $\hat{s}\in (\alpha,s_{\rm ref})$: In order to find $s_{2}^{\rm opt}$ on $[\alpha,s_{\rm ref}]$ such that $G(s_{2}^{opt})$ is minimum, we look for critical points of $G$, which satisfy 
$$ g'(s) = \frac{g(s_{\rm ref})-g(s)}{s-s_{\rm ref}}:=H(s).$$
By construction, function $g'$ is increasing on $(0,s_{\rm in})$,
$g'(\hat{s})=0$ (since $g$ is strictly convex, being equal to $1/\beta$ and
$\beta$ strictly concave by Lemma \ref{lemma1}), $g'(\cdot)<0$ on $(0,\hat{s})$ and $g'(\cdot)>0$ on $(\hat{s},s_{\rm in})$. Moreover, it is easy to see that the equation $H(s)=0$ has two solutions (and not more, as $g$ is strictlty convex): $s_{\rm ref}$ and $\bar{s}_{\rm ref}:= \{ s\in (0,\hat{s})\mbox{: }g(\bar{s}_{\rm ref})=g(s_{\rm ref})\}$. In addition, it follows that $H(\cdot)>0$ on $[0,\bar{s}_{\rm ref})$ and $H(\cdot)<0$ on $(\bar{s}_{\rm ref},s_{\rm ref})$. As a result, we can state that any critical point of function $G$ belongs to the interval $(\bar{s}_{\rm ref},\hat{s})$. \\
We show that there exist a unique critical point $s_{G}\in (\bar{s}_{\rm ref},\hat{s})$ of $G$ by proving that function $H$ is decreasing on this interval
$$H'(s)= \frac{-g'(s)(s-s_{\rm ref})- \big( g(s_{\rm ref})-g(s) \big)}{(s-s_{\rm ref})^{2}} = - \frac{g'(s)+H(s)}{s-s_{\rm ref}} < 0.$$
Graphically, the critical point $s_{G}$ is the abscissa of the intersection of the graphs $g'$ and $H$ (see Figure \ref{fig:CriticalPoints}). 
\begin{figure}[ht!]
\centering
\includegraphics[scale=0.4]{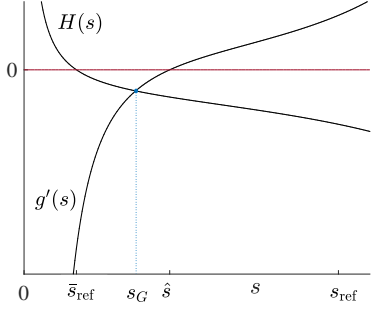}
\caption{Graphical determination of $s_{G}$.}
\label{fig:CriticalPoints}
\end{figure} 
Since we look for the minimum value of function $G$ on the interval $[\alpha,s_{\rm ref}]$, one has that $s_{2}^{\rm opt}$ depends on the value of $\alpha$. A direct conclusion is that $G'(s_{2}^{\rm opt})=0$ when $s_{2}^{\rm opt}=s_{G}$, while $G'(s_{2}^{\rm opt})>0$ when $s_{2}^{\rm opt}=\alpha$. 
\item $s_{\rm ref}\leq \hat{s}$: One has that $s_{2}\leq s_{\rm
    ref}\leq \hat{s}$ for all $s_{2} \in [\alpha,s_{\rm ref}]$. Since
  function $g(\cdot)$ is decreasing on the left of $\hat{s}$, then
  $g(s_{\rm ref})\leq g(s_{2})$. Consequently, function $G$ is
  nonnegative on $[\alpha,s_{\rm ref}]$ and the optimal value which
  makes it equal to zero is $s_{2}^{\rm opt}=s_{\rm ref}$. 
\end{enumerate}
\end{proof} 
\subsection{Characterization of the best value of the parameter $d$}\label{sec:POptDfree}
Given a nominal desired value $s_{\rm ref} < s_{\rm in}$ as output of the process, we look for solutions of the optimization problem
\begin{equation}\label{eq:OptDfree}
\min_{(V_1,V_{2},d)\in \Rset^{3}_{+}} \{ V_{1}+V_{2}\mbox{  : such that   } s_{1}= s_{\rm ref} \mbox{   at steady state }\},
\end{equation}
that we denote by $(V_{1}^{\ast},V_{2}^{\ast},d^{\ast})$. \\
\begin{proposition}\label{prop:OptDfree}
The solution of problem \eqref{eq:OptDfree} satisfies:
\begin{enumerate}[(i)]
\item If $\hat{s}< s_{\rm ref}$, then $V_{1}^{\ast}=0$, $V_{2}^{\ast}= Q (s_{\rm in}-s_{\rm ref}) g(\hat{s})$ and $d^{\ast}= Q \frac{s_{\rm in}-s_{\rm ref}}{s_{\rm ref}-\hat{s}}$. 
\item If $\hat{s}\geq s_{\rm ref}$, then $V_{1}^{\ast}= Q / \mu(s_{\rm ref})$, $V_{2}^{\ast}= 0$ and $d^{\ast}$ can take any value on the interval $[0,+\infty)$.
\end{enumerate}
\end{proposition}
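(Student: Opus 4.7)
The overall strategy is to exploit Proposition \ref{prop:dgiven}, which already solves the inner problem for each fixed $d$, and then reduce \eqref{eq:OptDfree} to a one-dimensional minimization of $W(d) := V_1^{\rm opt}(d) + V_2^{\rm opt}(d)$ over $d \geq 0$. Case (ii) would follow immediately: when $\hat s \geq s_{\rm ref}$, item (iii) of Proposition \ref{prop:dgiven} applies for every $d > 0$, giving $W(d) \equiv Q/\mu(s_{\rm ref})$ with $V_1^{\rm opt} = Q/\mu(s_{\rm ref})$ and $V_2^{\rm opt} = 0$; the case $d=0$ corresponds to \eqref{sysV1}, which delivers the same volume. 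Hence every $d \in [0,+\infty)$ is optimal.

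For case (i), $\hat s < s_{\rm ref}$, I would partition $(0,+\infty)$ by tracking $\alpha(d) = \max\bigl(0,\, s_{\rm ref} - (Q/d)(s_{\rm in}-s_{\rm ref})\bigr)$ against the two thresholds $s_G < \hat s$, introducing $d_1 := Q(s_{\rm in}-s_{\rm ref})/(s_{\rm ref}-s_G)$ and $d^\ast := Q(s_{\rm in}-s_{\rm ref})/(s_{\rm ref}-\hat s)$. On $(0,d_1]$, the inequality $\alpha \leq s_G$ places us in item (ii) of Proposition \ref{prop:dgiven} with $s_2^{\rm opt} = s_G$, so $W$ is affine: $W(d) = Q\,g(s_{\rm ref})(s_{\rm in}-s_{\rm ref}) + d\,G(s_G)$, and its slope $G(s_G)$ is negative by construction of $s_G$. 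On $(d_1,d^\ast]$, one has $\alpha \in (s_G,\hat s]$, still in item (ii) but with $s_2^{\rm opt} = \alpha$; on $[d^\ast,+\infty)$, $\alpha \geq \hat s$ switches us to item (i), where $V_1^{\rm opt}=0$ and $V_2^{\rm opt} = d\,g(\alpha)(s_{\rm ref}-\alpha)$. In both of these latter regimes, the defining identity $d(s_{\rm ref}-\alpha) = Q(s_{\rm in}-s_{\rm ref})$ (valid whenever $\alpha > 0$) collapses $v_1(\alpha)+v_2(\alpha)$ to the compact form $W(d) = Q(s_{\rm in}-s_{\rm ref})\,g(\alpha(d))$. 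I expect this algebraic collapse to be the main technical point, since it is what makes the outer $d$-optimization tractable.

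The conclusion then follows from a monotonicity argument. Since $\alpha(d)$ is continuous and strictly increasing on $(d_1,+\infty)$, and since $g=1/\beta$ is strictly convex on $(0,s_{\rm in})$ with unique minimum at $\hat s$ (Lemma \ref{lemma1}), the map $d \mapsto g(\alpha(d))$ is strictly decreasing on $(d_1,d^\ast]$ and strictly increasing on $[d^\ast,+\infty)$. Combined with the strict decrease of $W$ on $(0,d_1]$ and a routine continuity check at $d=d_1$ (where both expressions reduce to $Q(s_{\rm in}-s_{\rm ref})g(s_G)$), this identifies $d^\ast$ as the unique global minimizer, with $W(d^\ast) = Q(s_{\rm in}-s_{\rm ref})g(\hat s)$ and $\alpha(d^\ast) = \hat s$. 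Finally, substituting $s_2^{\rm opt}=\hat s$ and $d = d^\ast$ into the formula of Proposition \ref{prop:dgiven}(ii) for $V_1^{\rm opt}$ makes the two terms cancel exactly by the very choice of $d^\ast$, yielding $V_1^\ast = 0$ and $V_2^\ast = W(d^\ast) = Q(s_{\rm in}-s_{\rm ref})g(\hat s)$, as claimed.
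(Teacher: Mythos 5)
Your proposal is correct and follows essentially the same route as the paper: both reduce \eqref{eq:OptDfree} to the one-dimensional minimization of $V_{1}^{\rm opt}(d)+V_{2}^{\rm opt}(d)$ over $d$ via Proposition \ref{prop:dgiven}, show this map is decreasing up to $d^{\ast}$ and increasing afterwards (using the identity $V^{\rm opt}(d)=Q(s_{\rm in}-s_{\rm ref})\,g(\alpha(d))$ together with the strict convexity of $g$ and its minimizer $\hat{s}$), and settle case (ii) directly from item (iii) of Proposition \ref{prop:dgiven}. Your finer three-regime split at $d_{1}$, with the affine expression of the total volume on $(0,d_{1}]$, is merely a slightly more explicit rendering of the paper's sign argument for $\partial_{d}V^{\rm opt}$ on $[0,d^{\ast})$.
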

\begin{proof}
In order to solve problem \eqref{eq:OptDfree}, we rely on the optimization results obtained in Section \ref{sec:POptDgiven}. Thus, $(V_{1}^{\ast},V_{2}^{\ast},d^{\ast})= (V_{1}^{\rm opt}(d^{\ast}),V_{2}^{\rm opt}(d^{\ast}),d^{\ast})$, where $d^{\ast}$ minimizes $V_1^{\rm opt}(d)+V_{2}^{\rm opt}(d)$ and $V_1^{\rm opt}$, $V_2^{\rm opt}$ are given by Proposition \ref{prop:dgiven}.\\
\\
\textit{(i)} From Proposition \ref{prop:dgiven}, one easily deduces that the total volume $V^{\rm opt}(d)=V_{1}^{\rm opt}(d)+V_{2}^{\rm opt}(d)$ fulfills
\begin{equation*}\label{eq:Vd}
V^{\rm opt}(d)= \left \vert \begin{array}{l c c} 
\frac{Q }{\mu(s_{\rm ref})} +  d G(s^{\rm opt}(d)) & \hspace*{2mm} \mbox{if } 0 \leq d < d^{\ast} & (\mbox{case (ii) in Prop.\ref{prop:dgiven}}), \\ \\
Q (s_{\rm in}-s_{\rm ref}) g(s_{\rm ref}-\frac{Q}{d}(s_{\rm in}-s_{\rm ref})) & \hspace*{2mm} \mbox{if } d\geq d^{\ast} & (\mbox{case (i) in Prop.\ref{prop:dgiven}}),
 \end{array}\right.
\end{equation*}
where $s^{\rm opt}$ must be now seen as a function of parameter $d$.\\
%In order to solve problem \eqref{eq:OptDfree},
We analyze the monotonicity of function $V^{\rm opt}$. 
\begin{itemize}
\item When $0 \leq d < d^{\ast}$, one has that
$$\frac{\partial V^{\rm opt}}{\partial d} = G(s^{\rm opt}(d)) - d \frac{\partial G}{\partial s}|_{s=s^{\rm opt}(d)}\frac{\partial s^{\rm opt}(d)}{\partial d}.$$
From Proposition \ref{prop:dgiven}, it follows that $G(s^{\rm
  opt}(d))<0$ and $s^{\rm opt}(d)$ corresponds either to $s^{G}$ (with
$G'(s^{G})=0$) or to $\alpha$ (with $G'(\alpha)>0$). In both cases one
has $\frac{\partial V^{\rm opt}}{\partial d}<0$, that is, $V^{\rm opt}$ is decreasing on $[0,d^\ast)$. 
\item When $d\geq d^{\ast}$, one has that
$$\frac{\partial V^{\rm opt}}{\partial d} = \frac{Q^{2}}{2d^{2}}(s_{\rm in}-s_{\rm ref})^{2} g'(s_{\rm ref}-\frac{Q}{d}(s_{\rm in}-s_{\rm ref})).$$
By definition, $\hat{s}$ is the only value satisfying $g'(\hat{s})=0$ and so $d^{\ast}$ is the only critical point of function $V^{\rm opt}(\cdot)$. \\
It remains to prove that $d^{\ast}$ is a minimum of function $V^{\rm opt}(d)$. But 
$$\frac{\partial^{2} V^{\rm opt}}{\partial d^{2}} (d^{\ast})= \frac{Q^{3}}{4(d^{\ast})^{4}}(s_{\rm in}-s_{\rm ref})^{3} g''(\hat{s}),$$
which is positive as $g$ is strictly convex. Therefore $V^{\rm opt}$ is increasing on $[d^\ast,\infty)$.  
\end{itemize}
From these two points we conclude that the optimal value of $d$ is $d^\ast$.\\
\\
\textit{(ii)} This is a direct consequence of the statement (iii) in Proposition \ref{prop:dgiven}, since in this case the optimal volumes solution of problem \eqref{eq:OptDgiven} do not depend on parameter $d$. 
\end{proof}

\section{Discussion and interpretation of the
  results}\label{sec:Interpretation}
Here, we discuss the impact of the lateral diffusion from ecological and engineering points of view. Sections \ref{sec:IntepretationEcological} and \ref{sec:InterpretationEngineering} give a general interpretation of the results, while Section \ref{sec:NumericalExample} aims to quantify the benefits of the lateral diffusion in a particular numerical case.

\subsection{From an ecological view point}\label{sec:IntepretationEcological}
In Section \ref{sec:performances}, we have investigated the yield
conversion of the proposed structured chemostat and compared it with
the one of a single-tank chemostat. Our main result, presented in
Proposition \ref{prop:s1d} can be interpreted depending on the
global removal rate $D=Q/V$ and a threshold $\hat s$ (that is defined as the
maximizer of the function $\beta$ defined in \eqref{defbeta}) as follows:
\begin{enumerate}
\item If $D > \mu(s_{\rm in})$, a spatial distribution of the total
  volume $V$ could avoid the extinction of the micro-organisms while
  it happens when the volume $V$ is perfectly mixed.
Therefore, the lateral-diffusive compartment plays the role of a ``refuge'' for the
micro-organisms in case of large removal rate.
\item If $D \in [\mu(\hat{s}),\mu(s_{\rm in})]$, a spatial distribution of
  the total volume $V$ makes systematically increasing the output substrate concentration obtained when the volume $V$ is perfectly mixed.
\item If $D < \mu(\hat{s})$, a
  spatial distribution of the total volume $V$ could reduce the output
  substrate concentration obtained when the volume $V$ is perfectly
  mixed, but this is not systematic. This means that for small
  removal rates $D$ (as often met in soil ecosystems) one cannot
  know if a perfectly mixed model is under- or over-estimating the
  expected output level of the resource. 
\end{enumerate}
We have also analyzed the influence of the diffusion parameter
$d$ on the yield conversion in cases 1 and 3 and shown the existence of
a most efficient value $d^\star$.
The fact that a lateral-diffusive compartment is beneficial for
``extreme'' cases (i.e. large or small removal rates) does not appear to be an intuitive result for us.

\subsection{From an engineering view point}\label{sec:InterpretationEngineering}

In Section \ref{sec:design}, we studied optimal choices of the main
design parameters (reactor volume and diffusion rate) that minimize
the required volume for a given conversion rate. Our main results,
presented in Propositions \ref{prop:dgiven} and \ref{prop:OptDfree}
state that, when the desired substrate output concentration is above
certain threshold (more precisely, when $s_{\rm ref}>\hat{s}$), the volume
of a single-tank chemostat can be reduced by using the
structure with lateral diffusion. This result complements the work in \cite{LT82,GBBT96,HD05,NS06,ZCD15}, where the authors propose a methodology to diminish the volume of a single-tank chemostat when $s_{\rm ref}\leq \hat{s}$, by using either $N$ CSTR (continuous stirred tank reactor) in series or a CSTR connected in series to a PFR (plug flow reactor). We distinguish between the following cases:
\begin{itemize}
\item Diffusion coefficient is fixed. Depending on model
  parameters $s_{\rm in}$, $s_{\rm ref}$, $Q$, $d$ and $\mu(\cdot)$, the
  optimal structure may be composed of two tanks (of non null volumes
  $V_{1}^{\rm opt}$ and $V_{2}^{\rm opt}$) or a a single lateral tank
  (of volume $V_{2}^{\rm opt}$) connected by diffusion to the main stream. 
\item Diffusion coefficient can be optimized as well. The
  optimal structure is necessarily a single lateral tank (of volume
  $V_{2}^{\rm opt}$) connected by diffusion (with optimal diffusion
  rate $d^{\ast}= Q \frac{s_{\rm in}-s_{\rm ref}}{s_{\rm ref}-\hat{s}}$) to the
  main stream.
\end{itemize}
So an important message of this study is that the particular structure of
a single tank connected by diffusion to a pipe that conducts the input stream, as
depicted on Figure \ref{figdilution}, can be an efficient
configuration, better than a single tank directly under the main
stream. To our knowledge, this result is new in the literature.\\

The mathematical analysis has also revealed that the function $g$, i.e. 
the inverse of the function $\beta$ defined in \eqref{defbeta}, is playing an
important role in determining if the best configuration is composed of one or two tanks (more precisely the relative position of the output
reference value $s_{\rm ref}$ with respect to the minimizer $\hat s$ of
$g$). This is the same function than the one used for the optimal design of
tanks in series (with also a discussion on the relative position of
$s_{\rm ref}$  with respect to $\hat s$, see, e.g., \cite{GBBT96,HRT03}), but with two main differences:
\begin{enumerate}
\item Due to the particular considered structure, there is a trichotomy (one
  single mixed tank, two tanks, or one single lateral tank) instead of
  the dichotomy (one or more tanks) found for the problem with tanks in series. This trichotomy is discussed below with the help of the additional parameter $\alpha=\max(0,s_{\rm ref}-\frac{Q}{V}(s_{\rm in}-s_{\rm ref}))$.
\item For small values of $s_{\rm ref}$ (compared to $\hat s$), a
  lateral-diffusion compartment does not bring any improvement
  compared to a single perfectly mixed tank, while this is the
  opposite for tanks in series (i.e. several tanks are better than a
  single one when $s_{\rm ref}<\hat s$).
\end{enumerate}
These points can be grasped by the following graphical
interpretation. Consider the total volume $V$ required to obtain the
output concentration $s_{\rm ref}$ at steady state. In our case, it can be written in
terms of the function $g$ as follows
\begin{equation}\label{eq:Vdiscussion}
V=Q\underbrace{g(s_{\rm ref})(s_{\rm in}-s_{\rm ref})}_{A} + d\underbrace{(g(s_{\rm ref})-g(s_{2}^\star))(s_{2}^\star-s_{\rm ref})}_{B}
\end{equation}
where $s_{2}^\star$ is the steady state in the second compartment. One
can notice that the number $A$ is proportional to the volume
necessary for a single chemostat to have $s_{\rm ref}$ as resource
concentration at steady state (remind that this volume is equal to
$Q/\mu(s_{\rm ref})\equiv QA$). Therefore, a configuration
with a lateral-diffusive compartment would require a smaller volume than that of the single chemostat exactly
when the number $B$ is negative. Figure \ref{fig:Illustration} illustrates that this
is possible only when $s_{\rm ref}$ is above the minimizer $\hat s$ of the function $g$
(remind that the function $g$ is strictly convex, since it is equal to $1/\beta$ and
$\beta$ is strictly concave by Lemma \ref{lemma1}). \\ 
\begin{figure}[ht!]
\centering
\subfigure[$s_{\rm ref}>\hat s$ ($B$ negative)]{\includegraphics[width=0.25\textwidth]{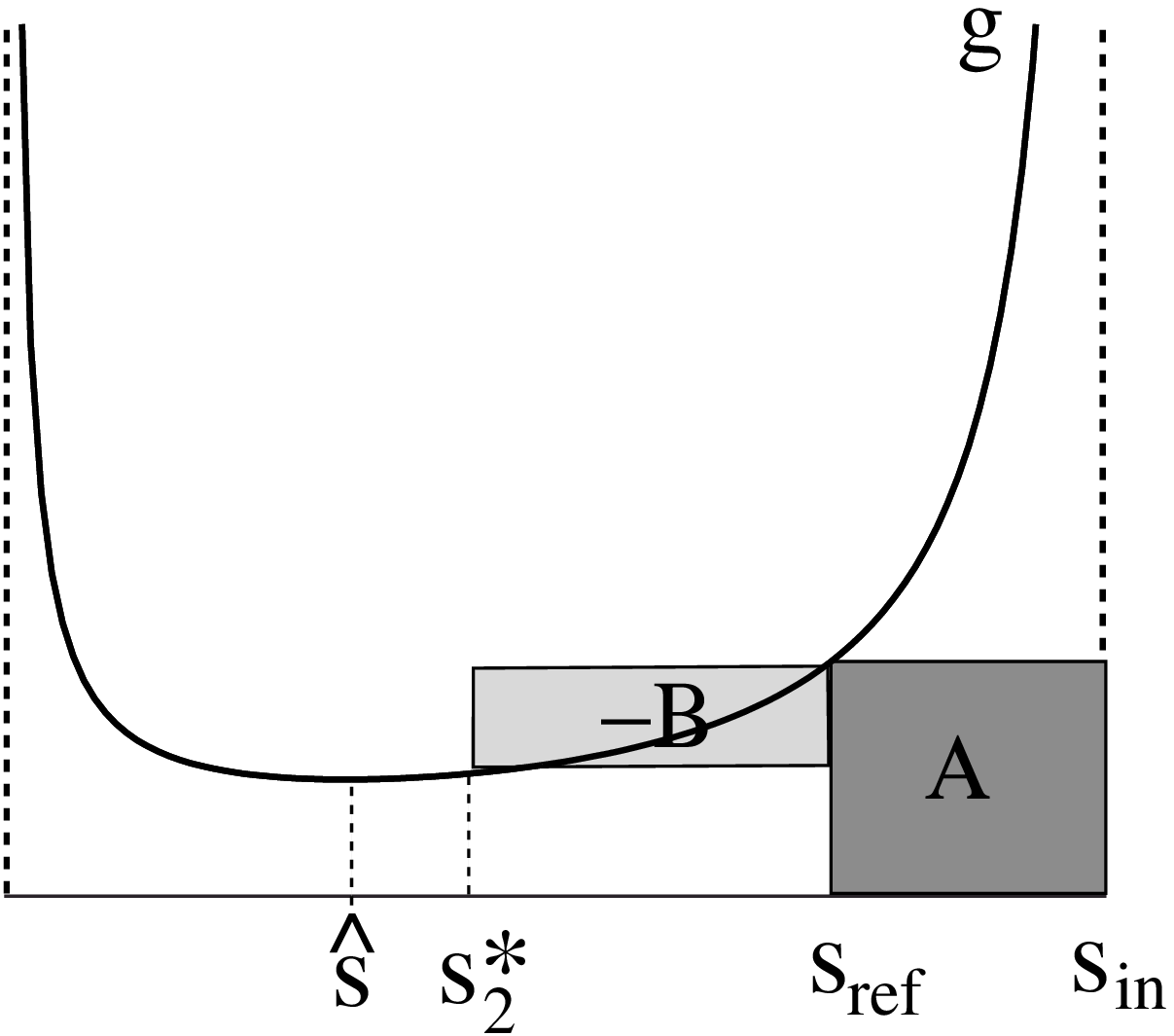}}\hspace{1cm}
\subfigure[$s_{\rm ref}<\hat s$ ($B$ positive)]{\includegraphics[width=0.25\textwidth]{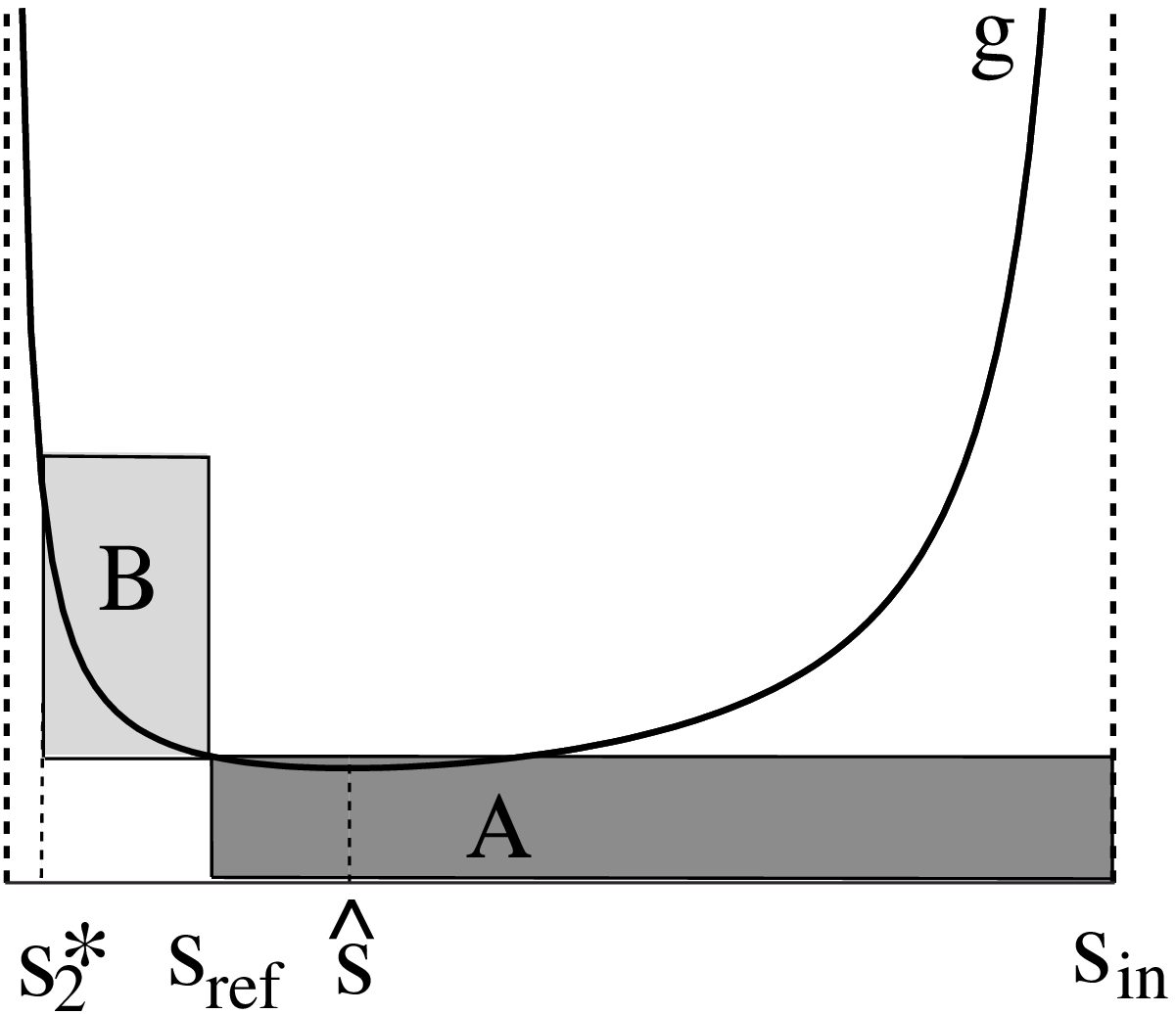}}
\caption{Graphical representation of quantities $A$ and $|B|$ in \eqref{eq:Vdiscussion}.}
\label{fig:Illustration}
\end{figure}
Furthermore, the quantity $B$ is equal to $G(s_{2}^\star)$, where the function $G$ defined in \eqref{defgG} admits an unique minimum at $s_{G} \in [0,s_{\rm ref}]$. Proposition \ref{prop:dgiven} states that, when $s_{\rm ref}>\hat{s}$, the optimal value of $s_{2}^{\star}$ (that is, the value of $s_{2}^{\star}$ which minimizes the total volume) is $s_{G}$ when $\alpha\leq s_{G}$ and $\alpha$ in other case, the later scenario corresponding to the particular configuration with $V_{1}=0$ (since $V_{1}=Qg(s_{\rm ref})(s_{\rm in}-s_{\rm ref}) + d g(s_{\rm ref})(s_{2}^\star-s_{\rm ref})$). A graphical interpretation of the optimized structures obtained when parameter $d$ is fixed is given in Figure\ref{fig:optimized}. When the diffusion rate can be tuned, the optimized configuration is as depicted in Figure \ref{fig:optimized}-(c).\\
 
Finally, let us recall from the theory of optimal design of chemostats in series that the first tank (when it is optimal to have more than one tank) has systematically a resource concentration $s_{1}^\star$ above $\hat s$ at steady state (see, e.g., \cite{GBBT96,HRT03}). Thus, for an industrial perspective, we can state that a lateral-diffusive compartment for the first tank of an optimal series of chemostat could systematically improve the performance of the overall process.
\begin{figure}[ht!]
\centering
\subfigure[$s_{\rm ref}<\hat s$]{\includegraphics[width=0.3\textwidth]{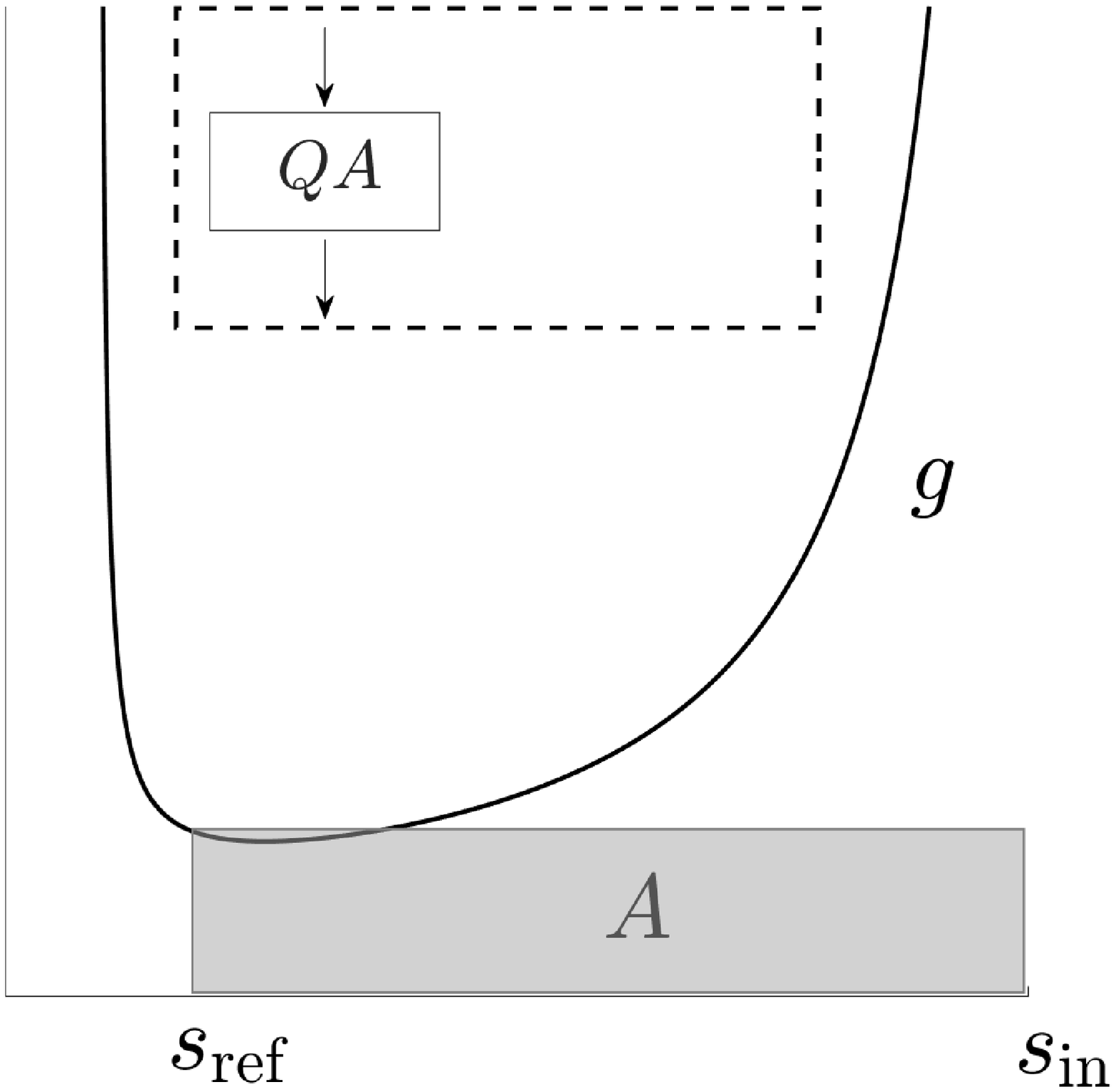}}\hspace{1cm}
\subfigure[$\alpha \leq s_{G}<\hat{s}<s_{\rm ref}$]{\includegraphics[width=0.3\textwidth]{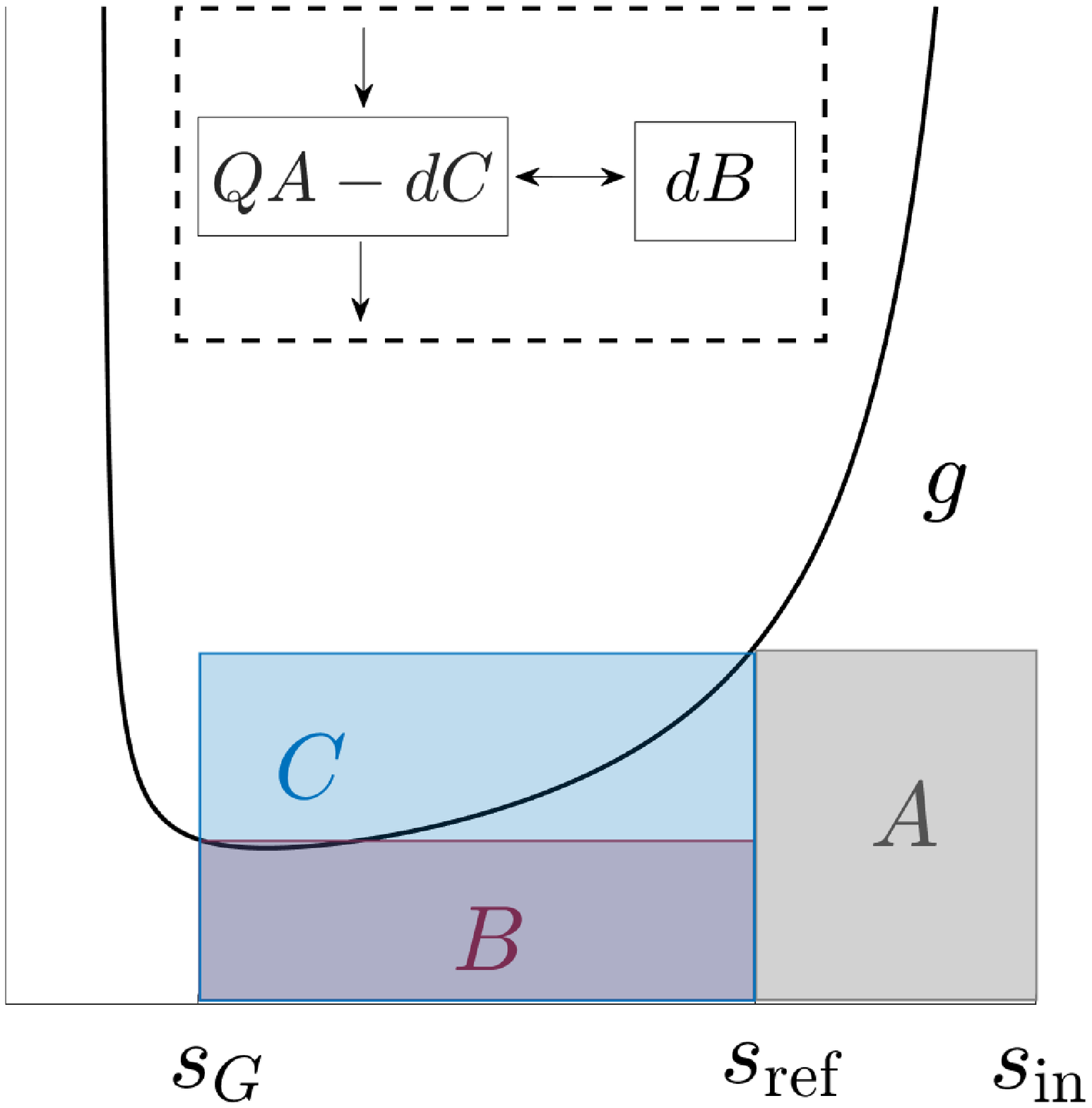}}
\subfigure[$\hat{s}\leq \alpha$ or $s_{G}<\alpha<\hat{s}<s_{\rm ref}$]{\includegraphics[width=0.3\textwidth]{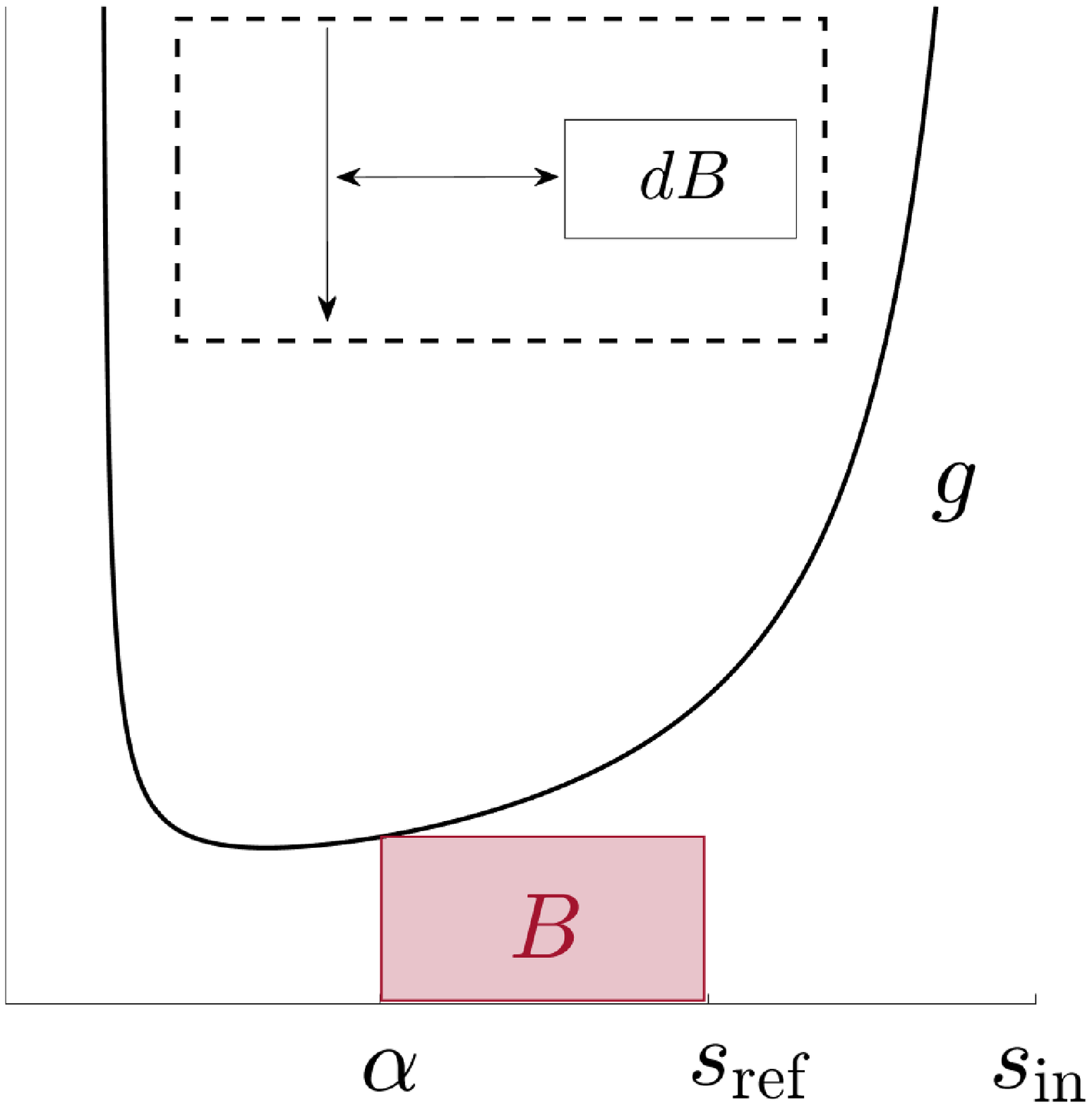}}
\caption{Graphical representation of the optimized configurations when parameter $d$ is fixed.}
\label{fig:optimized}
\end{figure}

\subsection{A numerical example}\label{sec:NumericalExample}
One may wonder how much could be gained (in terms of residence time) by using the proposed structure. Nevertheless, it is difficult to quantify the overall profit since the optimal design depends on parameters $s_{\rm in}$, $s_{\rm ref}$, $Q$, $\mu(\cdot)$ and $d$ (when it is not fixed beforehand). As an illustrative example, we compare the total optimal volumes $V^{\rm opt}(0)$, $V^{\rm opt}(Q)$ and $V^{\rm opt}(d^{\ast})$, obtained by solving problem \eqref{eq:OptDgiven} when $Q=1$, $s_{\rm in}=10$, $\mu(\cdot)$ is the Monod function with $\mu_{max}=1$ and $K=0.5$ (in this case, $\hat{s} \approx 1.79$) and diffusion coefficients $d=0$, $Q$ and $d^{\ast}$, respectively. More precisely, Figure \ref{fig:Influenced}-(a) compares the three diffusion coefficients (seen as functions of parameter $s_{\rm ref}$) while the associated optimal volumes are depicted in Figure \ref{fig:Influenced}-(b). Notice that $V^{\rm opt}(0)$ corresponds to the volume of the single-tank chemostat. 
\begin{figure}[ht!]
\centering 
\subfigure[Considered diffusion parameters (seen as functions of $s_{\rm ref}$).]{\includegraphics[width=0.4\textwidth]{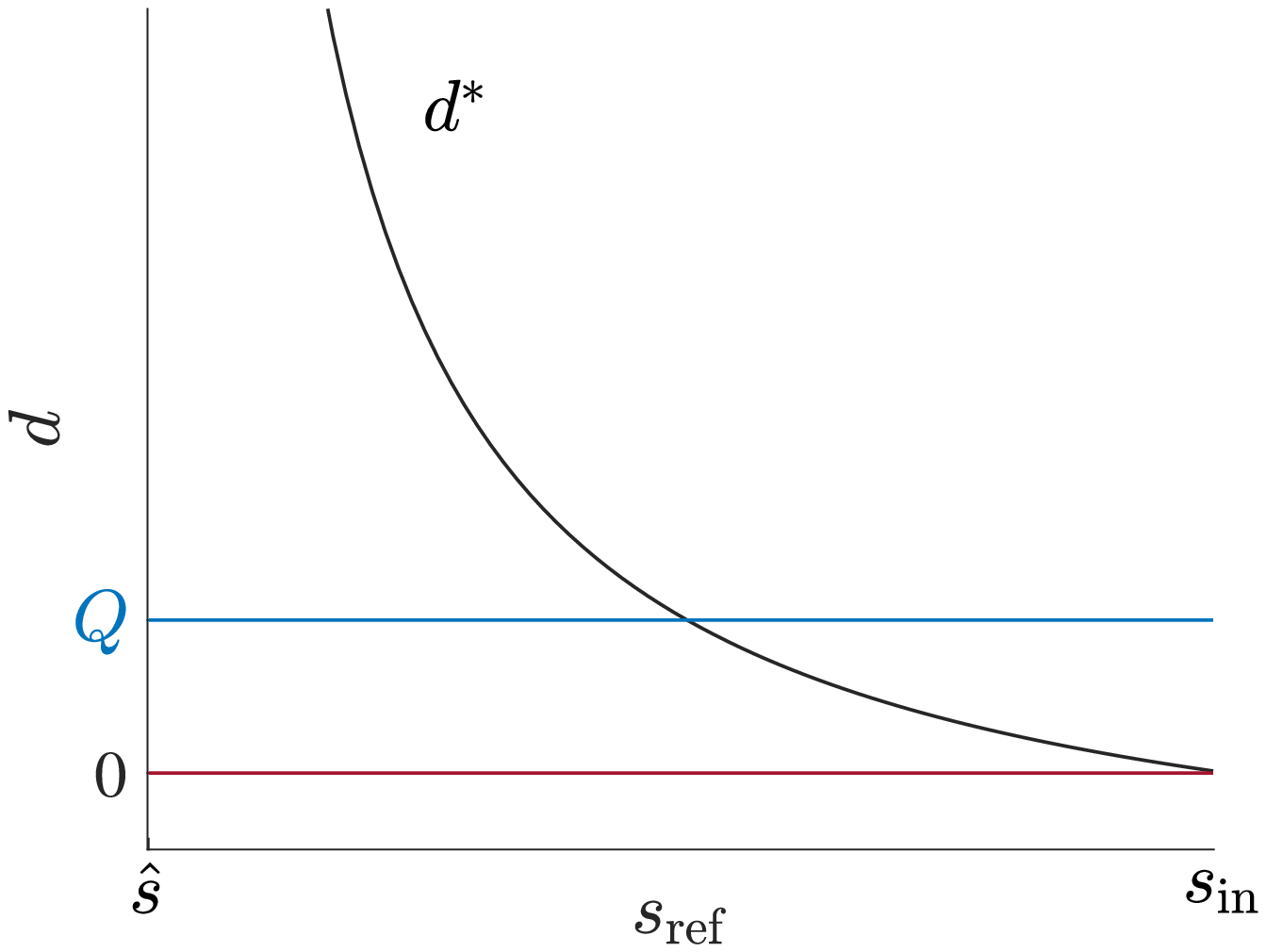}}\hspace{1cm}
\subfigure[Optimal volumes associated to the diffusion parameters in (a).]{\includegraphics[width=0.4\textwidth]{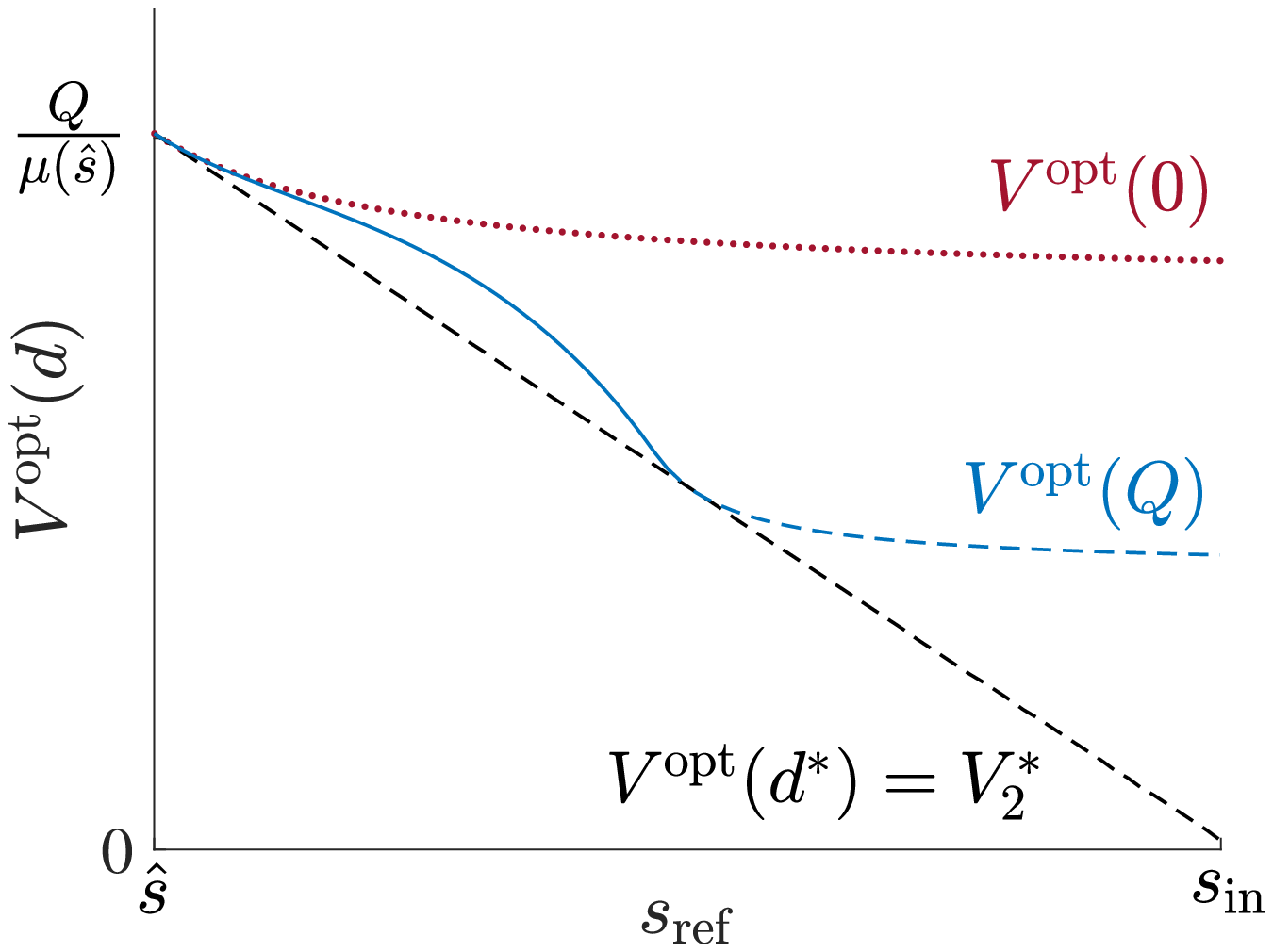}}
\caption{Comparison between the volumes $V^{\rm opt}(0)$, $V^{\rm opt}(Q)$ and $V^{\rm opt}(d^{\ast})$, obtained by solving problem \eqref{eq:OptDgiven} when $Q=1$, $s_{\rm in}=10$ and $\mu(\cdot)$ is the Monod function with $\mu_{max}=1$ and $K=0.5$ and diffusion coefficients $d=0$, $Q$ and $d^{\ast}$ ($d^{\ast}$ being the solution of problem \eqref{eq:OptDfree}), respectively. The solid, dashed and dotted lines in (b) represent, respectively, the values of $s_{\rm ref}$ for which the optimal design is composed of two tanks (of volumes $V_{1}^{\rm opt}$ and $V_{2}^{\rm opt}$), a single tank of volume $V_{2}^{\rm opt}$ or a single tank of volume $V_{1}^{\rm opt}$.}
\label{fig:Influenced}
\end{figure} \\
From Figure \ref{fig:Influenced}-(b) we remark that, when $d=Q$, there
exists a certain value of parameter $s_{\rm ref}$ in which the optimal
design transits from having two to one tank. Proposition
\ref{prop:OptDfree} infers that this transition occurs when $s_{\rm ref}=
\frac{s_{\rm in} + \hat{s}}{2}$ (in this case, when $s_{\rm ref} \approx
5.9$). One can observe that, for this particular value of $s_{\rm ref}$,
the single-tank volume is reduced approximately to its half and, in
general, the volume reduction becomes more significant as the value
$s_{\rm ref}$ increases. In those cases the gains are quite
significant.

\section{Conclusion}

In this work, we have identified situations for which a compartment
connected by ``lateral diffusion'' is beneficial for ecological
or engineering outcomes.

The analysis has first revealed two thresholds on the input
resource concentration which allow to distinguish three kinds of
situations for guaranteeing the existence of a positive equilibrium
depending on the diffusion rate $d$:
1. $d$ has to be positive but below a maximal value $\bar d$,
2. $d$ has simply to be positive,
3. $d$ can take any nonnegative value.
When the positive equilibrium exists, we have proved that it is
necessary asymptotically stable (although not always hyperbolic).

We have also studied the impact of the diffusion rate $d$ on the yield
conversion and concluded that there exists an optimal value of $d$
that maximizes the yield conversion (which is then necessarily better than
for a single tank) for ``extreme'' cases of the removal rate
(i.e. either small or large), which did not appear to be an intuitive result to us. 
This property implies that in natural habitats (such as in soil
ecosystems) the type of feeding (by convection or diffusion) from a
given flow rate could have a significant impact on the resource conversion.

In terms of total volume required for achieving a given yield conversion, we have
provided conditions that discriminate which configurations between one
or two tanks are the best.
Our conclusion is that a lateral compartment is beneficial compared to
a single chemostat when the yield conversion is not too important
(i.e. when the output resource concentration is not too small compared to the input one).
Surprisingly, we have also found that the limiting case of a
single tank purely connected by diffusion to the main stream (as depicted on
Figure \ref{figdilution}), and not crossed by the stream as in the classical chemostat, can provide the minimal volume.
For an industrial perspective, we can state that a lateral diffusive compartment for the first tank of an optimal series of chemostat
could systematically improve the performance of the overall process. Therefore, the analysis of combinations of series and lateral diffusive compartments (which is out of the scope of the present study) would most probably exhibit non-intuitive configurations that have not yet been considered in the literature. This would be the matter of a future work.

\section{Acknowledgments}
The authors thank the French LabEx Numev (project ANR-10-LABX-20) for
the postdoctoral grant of the First Author at MISTEA lab, Montpellier, France.

\bibliographystyle{plain}

\end{document}